\numberwithin{equation}{section}
\DeclareMathAlphabet{\mathpzc}{OT1}{pzc}{m}{it}
\theoremstyle{plain}
\newtheorem{lemma}{Lemma}[section]
\newtheorem{prop}[lemma]{Proposition}
\newtheorem{thm}[lemma]{Theorem}
\newtheorem{cor}[lemma]{Corollary}
\newtheorem{intthm}{Theorem}
\theoremstyle{definition}
\newtheorem{definition}[lemma]{math}
\newtheorem{rem}[lemma]{Remark}
\newtheorem{defi}[lemma]{Definition}
\newtheorem{exa}[lemma]{Example}
\newcommand{\bde}{\begin{defi}}
\newcommand{\ede}{\end{defi}\vspace{1mm}}
\newcommand{\ble}{\begin{lemma}}
\newcommand{\ele}{\end{lemma}}
\newcommand{\bpr}{\begin{prop}}
\newcommand{\epr}{\end{prop}}
\newcommand{\bt}{\begin{thm}}
\newcommand{\et}{\end{thm}}
\newcommand{\bco}{\begin{cor}}
\newcommand{\eco}{\end{cor}}
\newcommand{\bre}{\begin{rem}}
\newcommand{\ere}{\end{rem}}
\newcommand{\bex}{\begin{exa}}
\newcommand{\eex}{\end{exa}}
\newcommand{\bpf}{\begin{proof}}
\newcommand{\epf}{\end{proof}}
\newcommand{\mcB}{\mathcal{B}}
\newcommand{\mcC}{\mathcal{C}}
\newcommand{\mcD}{\mathcal{D}}
\newcommand{\mcE}{\mathcal{E}}
\newcommand{\mcF}{\mathcal{F}}
\newcommand{\mcG}{\mathcal{G}}
\newcommand{\mcH}{\mathcal{H}}
\newcommand{\mcL}{\mathcal{L}}
\newcommand{\mcM}{\mathcal{M}}
\newcommand{\mcN}{\mathcal{N}}
\newcommand{\mcO}{\mathcal{O}}
\newcommand{\mcP}{\mathcal{P}}
\newcommand{\mcQ}{\mathcal{Q}}
\newcommand{\mcR}{\mathcal{R}}
\newcommand{\mcS}{\mathcal{S}}
\newcommand{\mcT}{\mathcal{T}}
\newcommand{\mcU}{\mathcal{U}}
\newcommand{\mbF}{\mathbb{F}}
\newcommand{\mbQ}{\mathbb{Q}}
\newcommand{\mbR}{\mathbb{R}}
\newcommand{\mbZ}{\mathbb{Z}}
\newcommand{\mfe}{\mathfrak{e}}
\newcommand{\mff}{\mathfrak{f}}
\newcommand{\mfg}{\mathfrak{g}}
\newcommand{\mfh}{\mathfrak{h}}
\newcommand{\mfm}{\mathfrak{m}}
\newcommand{\msE}{\mathscr{E}}
\newcommand{\msF}{\mathscr{F}}
\newcommand{\msG}{\mathscr{G}}
\newcommand{\msH}{\mathscr{H}}
\newcommand{\msO}{\mathscr{O}}
\newcommand{\msU}{\mathscr{U}}
\newcommand{\msX}{\mathscr{X}}
\newcommand{\LSP}{\vspace{5mm}}
\newcommand{\mr}{\mathrm}
\newcommand{\N}{m}
\newcommand{\LL}{\ell}
\newcommand{\NN}{N}
\newcommand{\DMO}{\nabla}
\newcommand{\EX}{d}
\newcommand{\M}{m}
\begin{document}

\title[Frobenius pull-back of parabolic bundles and dormant opers]{Frobenius pull-back of parabolic bundles \\ and dormant opers}
\author{Yasuhiro Wakabayashi}
%\date{\today}
\markboth{}{}
\maketitle
\footnotetext{Y. Wakabayashi: 
Graduate School of Information Science and Technology, The University of Osaka, Suita, Osaka 565-0871, Japan;}
\footnotetext{e-mail: {\tt wakabayashi@ist.osaka-u.ac.jp};}
\footnotetext{2020 {\it Mathematical Subject Classification}: Primary 14H60, Secondary 14G17;}
\footnotetext{Key words: parabolic bundle, positive characteristic, Frobenius pull-back, dormant oper, Frobenius-destabilized bundle}
\begin{abstract}
We study parabolic bundles on an algebraic curve in positive characteristic. Our motivation is to properly formulate Frobenius pull-backs of parabolic bundles in a way that extends various previous facts and arguments for the usual non-parabolic Frobenius pull-backs. After defining that operation, we generalize a classical result by Cartier concerning Frobenius descent, that is, we establish a bijective correspondence (including the version using higher-level $\mathcal{D}$-modules) between parabolic flat bundles with vanishing $p$-curvature on a pointed curve and parabolic bundles on its Frobenius twist. This correspondence gives a description of maximally Frobenius-destabilized parabolic bundles in terms of dormant opers admitting logarithmic poles. As an application of that description together with a previous result in the enumerative geometry of dormant opers, we obtain an explicit formula for computing the number of such parabolic bundles of rank $2$ under certain assumptions.

\end{abstract}
\tableofcontents

%%%%%%%%%%%%%%%%%%%%%%%%%%%%%%%%%%%%%
%%%%%%%%%%%%%%%%%%%%%%%%%%%%%%%%%%%%%
\section{Introduction}
Let 
$(g, r)$ be a pair of nonnegative integers with $2g-2+r >0$ and
$\msX := (X, \{ \sigma_i \}_{i=1}^r)$  an $r$-pointed smooth proper  curve of genus $g$ over an algebraically closed field $k$.
A {\it parabolic (vector) bundle} is a vector bundle equipped with weighted  flags  on its fibers over the  marked points $\sigma_i$.
To be more precise, 
 a parabolic bundle on $\msX$ means  a collection 
\begin{align} \label{EQ646}
\msF := (\mcF, \vec{\mff}, \vec{\alpha})
\end{align}
(cf. Definition \ref{Def4453})
 consisting of a vector bundle (i.e., a locally free coherent sheaf) $\mcF$ on $X$,  a set of parabolic weights $\vec{\alpha}$, and 
 an $r$-tuple $(\mff_1, \cdots, \mff_r)$, where $\mff_i$ ($i=1, \cdots, r$)
  denotes  a sequence of $k$-linear surjections
\begin{align} \label{EQ647}
\sigma_i^*\mcF =: \mcF_i^{[\N_i]} \twoheadrightarrow \mcF_i^{[\N_i-1]} \twoheadrightarrow \cdots \twoheadrightarrow \mcF_i^{[1]} \twoheadrightarrow \mcF_i^{[0]} = 0
\end{align}
for some  $\N_i \in \mbZ_{>0}$.
Parabolic bundles and their moduli spaces  were introduced by V. B. Mehta and C. S. Seshadri (cf. ~\cite{MeSe}), and later investigated by many mathematicians  from various points of view, including  conformal field theory (cf., e.g.,  ~\cite{BMW}, ~\cite{Kum}).

Let us now consider  vector bundles in characteristic $p> 0$.
Note that some remarkable phenomena occurring  in characteristic $p$  are  derived from   properties of the Frobenius morphisms, which are endomorphisms given by sending every  regular function  to its  $p$-th power.
For example,
a classical result by Cartier (cf. ~\cite[Theorem 5.1]{Kal})  asserts that the pull-back along the relative Frobenius morphism $F_{X/k} : X \rightarrow X^{(1)}$ from $X$ to  its  Frobenius twist $X^{(1)}$ over $k$ yields an equivalence of categories
\begin{align} \label{EQ599}
\left(\begin{matrix} \text{the category of } \\ \text{vector bundles on $X^{(1)}$} 
\end{matrix} \right)
\xrightarrow{\sim} 
\left(\begin{matrix} \text{the category of} \\ \text{$p$-flat bundles on $X$} 
\end{matrix} \right),
\end{align}
where a {\it $p$-flat bundle} means a vector bundle equipped with (flat) connection with vanishing $p$-curvature (cf. ~\cite[\S\,5]{Kal} for the definition of $p$-curvature).
 This equivalence can be 
  generalized  to 
 ``{\it higher levels}"
  by using sheaves  of differential operators of finite level, introduced by P. Berthelot (cf. ~\cite{PBer1}, ~\cite[Corollary 3.2.4]{LeQu}).
Moreover, 
we know some extensions  of such  equivalences, known as the Ogus-Vologodsky correspondence,  by which 
nilpotent Higgs bundles corresponds to  
flat bundles
with nilpotent $p$-curvature (cf. ~\cite{GLQ}, ~\cite{Ohk}, ~\cite{OgVo}, ~\cite{Sch}).

In the present paper, we address the issue of
extending various results and arguments obtained for the non-parabolic case, including Cartier's theorem, by appropriately defining ``{\it Frobenius pull-back}" of parabolic bundles on pointed curves.
For positive integers $\NN$ and  $\N$,  denote by $\Xi_{\N, \NN}^{\leq}$ (cf. \eqref{EQ401})
the set of $\N$-tuples of integers $(a^{[1]}, \cdots, a^{[\N]})$ with $0 \leq a^{[1]} \leq  \cdots \leq  a^{[\N]} < p^\NN$. 
Let $(\N_1, \cdots, \N_r)$ be an $r$-tuple of positive integers 
and  $\vec{a} := (a_1, \cdots, a_r)$ an $r$-tuple such that
$a_i$ (for each $i=1, \cdots, r$) denotes an element $(a_i^{[1]}, \cdots, a_i^{[\N_i]})$ of $\Xi_{\N_i, \NN}^{\leq}$.
Also, we set $\vec{a} := (a_1/p^\NN, \cdots, a_r/p^\NN)$, where
$a_i/p^\NN := (a_i^{[1]}/p^\NN, \cdots, a_i^{[\N_i]}/p^\NN)$.
For a parabolic bundle of the form $\msE := (\mcE, \vec{\mfe}, \vec{a}/p^\NN)$ on the $\NN$-th Frobenius twist 
$\msX^{(\NN)} := (X^{(\NN)}, \{ \sigma_i^{(\NN)}\}_{i=1}^r)$   of $\msX$,
 the pull-back of $\msE$ along  the $\NN$-th relative Frobenius morphism $F_{X/k}^{(\NN)} : X \rightarrow X^{(\NN)}$ yields 
  a $p^\NN$-flat bundle (i.e., a parabolic and level-$\NN$ enrichment of  flat bundle, in the sense of Definition \ref{Def66}, (i))
\begin{align}
\msE^F_\flat := (\mcE^F, \nabla^F, \vec{\mfe}^{\,F}, \vec{a})
\end{align}
on $\msX$ (cf. \eqref{EQ167}); this will be called the {\it parabolic Frobenius pull-back} of $\msE$ by $F_{X/k}^{(\NN)}$ (cf. Definition \ref{Def113}).
(The pull-backs of parabolic bundles by a ramified   covering in characteristic zero are discussed  in 
~\cite{BKP}.
Our study can be regarded as  its characteristic-$p$ analogue for the case of Frobenius morphisms.)
Then, we prove the following assertion generalizing Cartier's theorem.

%--------------------------------------------------------------------------------
\begin{intthm}[cf. Theorem \ref{Prop11}]
 \label{ThC}
Denote by  $\mcB un_{\msX^{(\NN)}, \vec{a}/p^\NN}$ (resp., $\mcD^{(\NN -1)}\text{-}\mcB un_{\msX, \vec{a}}^{\psi = 0}$)
the category of parabolic bundles on $\msX^{(\NN)}$ of parabolic weights $\vec{a}/p^\NN$ (resp., parabolic $p^\NN$-flat bundles on $\msX$ of parabolic weights $\vec{a}$).
Then,
the assignment $\msE \mapsto \msE^F_\flat$
 gives an equivalence of categories
\begin{align} \label{EQ11171}
\mcB un_{\msX^{(\NN)}, \vec{a}/p^\NN} \xrightarrow{\sim}
\mcD^{(\NN -1)}\text{-}\mcB un_{\msX, \vec{a}}^{\psi = 0}.
\end{align}
\end{intthm}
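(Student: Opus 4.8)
The plan is to bootstrap from the higher-level non-parabolic Frobenius descent, localize the extra parabolic data at the marked points, and then produce a quasi-inverse to $\msE \mapsto \msE^F_\flat$. Since that functor and its functoriality are supplied by Definition \ref{Def113}, the substance lies in: (a) a local analysis at each $\sigma_i$ matching the flag $\mpe_i^F$ and the residue data; (b) the construction of an inverse functor; and (c) checking that the two composites are naturally isomorphic to the respective identities.

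Forgetting the marked-point data entirely, the functor $\mcE \mapsto (F_{X/k}^{(\NN)*}\mcE, \nabla^{\mathrm{can}})$ --- with $\nabla^{\mathrm{can}}$ the canonical $\mcD_X^{(\NN-1)}$-action on a Frobenius pull-back --- is the higher-level Cartier equivalence of \cite[Corollary 3.2.4]{LeQu} (after \cite{PBer1}; it specializes to \cite[Theorem 5.1]{Kal} when $\NN = 1$), identifying vector bundles on $X^{(\NN)}$ with $\mcO_X$-coherent $\mcD_X^{(\NN-1)}$-modules on $X$ satisfying $\psi = 0$, the quasi-inverse being passage to horizontal sections. Because the marked points are disjoint and the logarithmic refinement near each is of a local character, I would reduce the claim to a statement over the formal neighborhood of a single $\sigma_i$: in a coordinate $t$ in which $F_{X/k}^{(\NN)}$ is modeled on $t \mapsto t^{p^\NN}$, prove the logarithmic higher-level Cartier descent, namely an equivalence between bundles with a flag of weights $a_i/p^\NN$ and level-$\NN$ logarithmic $\mcD$-modules with $\psi = 0$ whose residue exponents along the graded pieces of the flag are the prescribed integers $a_i^{[j]} \in \{0, 1, \dots, p^\NN - 1\}$ --- exactly the ones allowed by $\Xi_{\N_i, \NN}^{\le}$ --- under which the flag $\mpe_i$ corresponds to $\mpe_i^F$. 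It is the vanishing of the $p^\NN$-curvature that confines the residue exponents to this Cartier range, so that the weights $\vec a/p^\NN$ on $\msX^{(\NN)}$ and the integers $\vec a$ on $\msX$ match bijectively.

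The inverse functor sends a parabolic $p^\NN$-flat bundle $(\mcE^F, \nabla^F, \vec{\mpe}^{\,F}, \vec a)$ to the descended bundle $\mcE$ on $X^{(\NN)}$ (horizontal sections of $(\mcE^F, \nabla^F)$) together with, over each $\sigma_i^{(\NN)}$, the flag obtained by descending $\mpe_i^F$: near $\sigma_i$ one uses that $\mcE^F$ splits canonically according to the integral residue exponents of its level-$\NN$ logarithmic structure (again by $\psi = 0$) and that $\mpe_i^F$ is $\nabla^F$-compatible in the sense of Definition \ref{Def66}(i), so that it descends to a flag of weights $a_i/p^\NN$. Full faithfulness then reduces to the non-parabolic equivalence: morphisms on either side are morphisms of the underlying (flat) bundles preserving the flags, and the canonical identification $\sigma_i^* \mcE^F \cong \sigma_i^{(\NN)*}\mcE$ carries one flag-compatibility condition to the other; essential surjectivity and the fact that the two composites are the identity are the globalization of the local statement above.

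The main obstacle is precisely this logarithmic higher-level Cartier descent at the marked points, in the exact form required by Definition \ref{Def66}(i): that a level-$\NN$ logarithmic $\mcD$-module structure with $\psi = 0$ has residue data confined to the Cartier range, admits a canonical decomposition by those residues that is compatible with a parabolic flag, and that descending flags along $F_{X/k}^{(\NN)}$ is well-defined and essentially surjective onto the flags of the prescribed weights. I expect the cleanest route is through the Maruyama--Yokogawa-type dictionary rephrasing parabolic bundles with weights in $\tfrac{1}{p^\NN}\mbZ$ as $\tfrac{1}{p^\NN}\mbZ$-filtered sheaves: under it, $\msE \mapsto \msE^F_\flat$ becomes $F_{X/k}^{(\NN)*}$ applied filtration-degreewise (carrying its canonical $\mcD_X^{(\NN-1)}$-action), the inverse is descent applied degreewise, and the equivalence follows by applying the non-parabolic equivalence to each member of the filtration and tracking the induced logarithmic structure --- with the identification of the resulting exponents with $\Xi_{\N_i, \NN}^{\le}$ as the final point to settle.
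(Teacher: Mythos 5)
Your plan is correct and follows essentially the same route as the paper: reduce to the non-parabolic higher-level Cartier descent of Le Stum--Quir\'{o}s away from the marked points, and settle the flag/residue matching by a local analysis on the formal disc at each $\sigma_i$, where the quasi-inverse is given by horizontal sections together with the flag read off from the canonical decomposition by exponents (the paper does this in Propositions \ref{Prop10} and \ref{Prop52} by reducing to the split model $\bigoplus_j \msO_{d^{[j]},\flat}$ via ~\cite[Proposition-Definition 4.4.1]{Wak4}). The Maruyama--Yokogawa filtered-sheaf dictionary you propose is, in effect, what the paper's construction $\mcE^F = \bigcap_j \mr{Ker}(\widetilde{\varpi}^{[j]}_{\msE})$ with the twists by $a^{[j]}_\circ D_\circ$ already implements, so the two presentations differ only cosmetically.
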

%--------------------------------------------------------------------------------

The second part of the present paper concerns  the stability of parabolic bundles.
For a parabolic bundle $\msF$ as in \eqref{EQ646},
its parabolic  slope  is defined as the value
\begin{align}
\mr{par}\text{-}\mu(\msF) :=
\frac{1}{\mr{rk}(\mcF)} \cdot \left(\mr{deg}(\mcF) + \sum_{i=1}^r \sum_{j} a_i^{[j]} \LL_i^{[j]} \right),
\end{align}
where $a_i^{[j]}$'s and $\LL_i^{[j]}$'s are the parabolic weights and types, respectively,  of the parabolic structure $(\vec{\mff}, \vec{\alpha})$ at $\sigma_i$ 
(cf. \eqref{EQ299}).
Then, we say that $\msF$ is {\it semistable} (resp., {\it stable}) if the inequality $\mr{par}\text{-}\mu (\msE) \leq \mr{par}\text{-}\mu (\msF)$ (resp., $\mr{par}\text{-}\mu (\msE) < \mr{par}\text{-}\mu (\msF)$) holds for any nontrivial  proper parabolic subbundle $\msE$ of $\msF$ (cf. Definition \ref{Def44}).

We are primarily interested in stable parabolic bundles whose parabolic Frobenius pull-back by $F_{X/k}^{(\NN)}$ are maximally unstable with respect to the Harder-Narasimhan polygon; these bundles are called {\it maximally $F^{(\NN)}$-destabilized (parabolic) bundles} (cf. Definition \ref{Def}).
For  the previous study of the non-parabolic case,
we refer the reader to  ~\cite{JoPa}, ~\cite{JRXY}, 
~\cite{Li}, and ~\cite{Zha}.
By a result of K. Joshi and C. Pauly (cf. ~\cite[Theorem 5.3.1]{JoPa}), 
 the equivalence of categories \eqref{EQ599} restricts to a bijective correspondence between maximally Frobenius-destabilized
   bundles and  certain $p$-flat bundles called dormant opers (cf. ~\cite{JRXY}, ~\cite{JoPa}, ~\cite{Jos}, ~\cite{Mzk2}, ~\cite{Wak1}, ~\cite{Wak5}, and ~\cite{Wak2} for the previous study of dormant opers).

To state our result concerning its parabolic (and higher-level) generalization, we prepare some notations.
Let $n$ be a positive integer, $\mcL$  a line bundle on $X^{(\NN)}$,   and $\vec{a}$ an element  of $(\Xi_{n, \NN}^{<})^{\times r}$ as in \S\,\ref{SS311} such that $s_1^1 (\vec{a}) \in (\Xi_{n, 1}^{\leq})^{\times r}$
 and $s_2^1 (\vec{a}) \in (\Xi_{n, \NN -1}^{\leq})^{\times r}$ (cf. \eqref{EQ503} for the definitions of $s_1^{1}(-)$ and $s_2^1 (-)$).
 We set $\vec{1}_{\times n} := (1_{\times n}, \cdots, 1_{\times n})$ (= the $r$-tuple of copies of $1_{\times n} := (1, \cdots, 1) \in \Xi_{n, \NN}^{\leq}$), $\omega := (n, \vec{1}_{\times n}, \vec{a}/p^\NN)$, and 
$\mcL^F_\flat := ((F^{(\NN)*}_{X/k}\mcL)(D_\star), \nabla^\mr{can}_{\mcL, D_\star})$ (cf. \eqref{Eq214}),
where $D_\star$ denotes the divisor $\sum\limits_{i=1}^r ( \sum\limits_{j=1}^n a_i^{[j]}) \sigma_i$.
 Then,  we obtain  the moduli  category
\begin{align} \label{EQ178}
\mcU^F_{\msX^{(\NN)}, \omega, \mcL} \ \left(\text{resp.,} \ \mcO p^{^\mr{Zzz...}}_{\msX, n, \vec{a}, \mcL_\flat^F} \right)
\end{align}
(cf. \eqref{Eq405}, \eqref{EQ178})  classifying
 maximally $F^{(\NN)}$-destablized stable  parabolic bundles on $\msX^{(\NN)}$ of rank $n$, type $\vec{1}_{\times n}$, and parabolic weights $\vec{a}/p^\NN$ (resp., dormant $\mr{GL}_n^{(\NN)}$-opers on $\msX$ of exponent $\vec{a}$ with determinant $\mcL_\flat^F$).

%---------------------------------------------------------------------------------
\begin{intthm}[cf. Theorem \ref{Prop3}, (i) and (ii)] \label{ThA}
Suppose that the following  three conditions are fulfilled:
\begin{itemize}
\item
 $n \mid \mr{deg}(\mcL)$;
 \item
 $\sum\limits_{j=1}^n a_i^{[j]} < p^\NN$ for every $i=1, \cdots, r$;
 \item
 $\sum\limits_{i=1}^r (s_1^1(a_i^{[n]})-s_1^1(a_i^{[1]})) < \frac{n(2g-2+r)}{2} < \frac{p}{n}$.
 \end{itemize}
Then,
there is a natural  
isomorphism of fibered categories 
\begin{align}
\mcU^F_{\msX^{(\NN)}, \omega, \mcL} \xrightarrow{\sim} \mcO p_{\msX, n, \vec{a}, \mcL^F_\flat}^{^\mr{Zzz...}}. 
\end{align}
 In particular, there are only finitely many isomorphism classes of maximally $F^{(\NN)}$-destabilized stable  parabolic bundles on $\msX^{(\NN)}$ 
 of rank $n$, type $\vec{1}_{\times n}$, and parabolic weights $\vec{a}/p^\NN$.
\end{intthm}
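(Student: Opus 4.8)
The plan is to derive Theorem~\ref{ThA} from the Cartier-type equivalence of Theorem~\ref{ThC} by matching, under~\eqref{EQ11171}, the maximally $F^{(\NN)}$-destabilized stable parabolic bundles with the dormant $\mr{GL}_n^{(\NN)}$-opers, and then to conclude finiteness from the known enumeration of dormant opers. So the functor realizing the asserted isomorphism is again essentially $\msE \mapsto \msE^F_\flat$, where now $\mcE^F$ is remembered together with the Harder--Narasimhan filtration of its underlying bundle, which will turn out to be an oper flag.

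The first step is to pin down the extremal Harder--Narasimhan polygon available to $\msE^F_\flat$ when $\msE$ has rank $n$, type $\vec{1}_{\times n}$, and weights $\vec{a}/p^\NN$. Since $\msE^F_\flat$ carries a flat connection $\nabla^F$ with logarithmic poles along $D_\star$ and vanishing $p^\NN$-curvature, an instability estimate for $p^\NN$-flat bundles (the parabolic, logarithmic, level-$\NN$ analogue of the bounds used in the non-parabolic case) controls the jumps of the HN filtration in terms of $\deg\Omega_X(\log D_\star) = 2g-2+r$ and the spread $\sum_{i=1}^r (s_1^1(a_i^{[n]}) - s_1^1(a_i^{[1]}))$ of the weights. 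The third hypothesis is precisely what forces this extremal polygon to be attained only by a complete flag whose graded pieces are line bundles twisted successively by $\Omega_X(\log D_\star)$ --- that is, by an oper flag --- while keeping the numerical data $(n,\vec{1}_{\times n},\vec{a})$ and the residues of $\nabla^F$ along $D_\star$ within the admissible exponent set; the hypothesis $n \mid \mr{deg}(\mcL)$ guarantees that the degree normalizations implicit in~\eqref{Eq214} are integral.

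The heart of the argument is then the equivalence: $\msE$ is maximally $F^{(\NN)}$-destabilized if and only if $(\mcE^F, \nabla^F)$, together with the HN filtration of $\mcE^F$, is a dormant $\mr{GL}_n^{(\NN)}$-oper on $\msX$ of exponent $\vec{a}$ with determinant $\mcL^F_\flat$. In one direction, an oper structure supplies by definition a complete flag on which $\nabla^F$ acts through the Griffiths-transversality isomorphisms, and a length count shows this flag realizes the extremal polygon; the parabolic bundle on $\msX^{(\NN)}$ recovered via Theorem~\ref{ThC} is therefore maximally $F^{(\NN)}$-destabilized, and under the numerical hypotheses one checks it is stable (as in the non-parabolic case). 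Conversely, if $\msE$ is maximally destabilized, vanishing of the $p^\NN$-curvature forces $\nabla^F$ to shift the HN filtration by at most one step --- a larger shift would split off a $\nabla^F$-horizontal piece, lowering the polygon --- and maximality of the polygon forces each induced map $\mr{gr}^i \to \mr{gr}^{i+1}\otimes\Omega_X(\log D_\star)$ to be an isomorphism; the weights $\vec{a}/p^\NN$ translate, under the pull-back formula~\eqref{EQ167}, into the exponent $\vec{a}$ and the residues of $\nabla^F$ along $D_\star$, while $\mr{det}(\mcE)\cong\mcL$ supplies the determinant $\mcL^F_\flat$. Matching of morphisms on the two sides is then formal, yielding the isomorphism $\mcU^F_{\msX^{(\NN)}, \omega, \mcL} \xrightarrow{\sim} \mcO p^{^\mr{Zzz...}}_{\msX, n, \vec{a}, \mcL^F_\flat}$ of fibered categories. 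The finiteness statement follows immediately: under the hypothesis $\frac{n(2g-2+r)}{2}\le\frac{p}{n}$ we are in the range where the enumerative results on dormant opers quoted in the introduction apply, so the right-hand category has only finitely many isomorphism classes.

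I expect the main obstacle to be the ``only if'' direction of this equivalence: controlling, in the \emph{parabolic, logarithmic, level-$\NN$} setting, that a maximally destabilized bundle's parabolic Frobenius pull-back has exactly the oper shape --- in particular that $\nabla^F$ interacts with the HN filtration strictly ``one step at a time'' and that the residues along $D_\star$ land in the prescribed exponent set. This is where the parabolic weights and the Berthelot level-$\NN$ enrichment genuinely complicate the classical Joshi--Pauly argument, and it will rely on the local description of $F^{(\NN)*}$ of parabolic bundles developed earlier in the paper.
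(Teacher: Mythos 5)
Your proposal is correct and follows essentially the same route as the paper: the forward direction is the Harder--Narasimhan-polygon characterization of oper flags (the parabolic, level-$\NN$ analogue of Joshi--Pauly, proved via the non-horizontality/degree-count lemmas), the inverse is Cartier descent from Theorem~\ref{ThC}, stability comes from the numerical hypotheses, and finiteness from the quasi-finiteness and properness of the dormant-oper moduli. The only ingredient you elide is that a dormant $\mr{GL}_n^{(\NN)}$-oper with exponent in $\Xi_{n,\NN}^{<}$ carries a \emph{unique} compatible quasi-parabolic structure at each marked point (the paper's Lemma~\ref{EQ244}), which is what makes the inverse functor well defined before Theorem~\ref{ThC} can be applied.
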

%---------------------------------------------------------------------------------

As an application of this result and the study on dormant opers discussed in ~\cite{Wak2}, 
we can give an explicit formula for computing the number of maximally $F^{(1)}$-destabilized parabolic bundles of rank $2$.
The assertion is described as follows.

%---------------------------------------------------------------------------------
\begin{intthm}[cf. Theorem \ref{Th22} for the full statement] \label{ThB}
Let us take a line bundle $\mcL$ on $X^{(1)}$ of even degree and   an element  $\vec{a} := (a_1, \cdots, a_r) \in (\Xi_{2, 1}^{<})^{\times r}$, where $a_i := (a_i^{[1]}, a_i^{[2]})$.
We write $\omega := (2, \vec{1}_{\times 2}, \vec{a}/p)$.
Suppose further that  the following  two conditions are fulfilled:
\begin{itemize}
\item
$r + \sum\limits_{i=1}^r (a_i^{[1]}+ a_i^{[2]})$ is even and $\sum\limits_{i=1}^r (a_i^{[2]}-a_i^{[1]}) <2g-2+r <  \frac{p}{2}$;
\item
$\msX$ is sufficiently general in the moduli stack $\mcM_{g, r}$ of $r$-pointed smooth proper curves of genus $g$. 
\end{itemize}
Then, the  cardinality $\sharp (\mcU^F_{\msX^{(1)}, \omega, \mcL} (k))$ of the set of isomorphism classes of parabolic bundles classified by   $\mcU^F_{\msX^{(1)}, \omega, \mcL} (k)$ satisfies the following equality:
 \begin{align}
  \sharp (\mcU^F_{\msX^{(1)}, \omega, \mcL} (k)) 
  =   2 \cdot p^{g-1} \cdot  \sum_{j =1}^{p-1} \frac{\prod\limits_{i=1}^r (-1)^{(j+1)(a_i^{[2]}-a_i^{[1]}+1)} \sin \left( \frac{(a_i^{[2]}-a_i^{[1]}) j \pi}{p}\right)}{\sin^{2g-2+r} \left(\frac{j \pi}{p} \right)}.
  \end{align}
\end{intthm}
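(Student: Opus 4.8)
The plan is to combine Theorem~\ref{ThA} with the explicit enumeration of dormant $\mr{PGL}_2$-opers on a general pointed curve from ~\cite{Wak2}. First I would specialize Theorem~\ref{ThA} to $\NN = 1$ and $n = 2$. One must check that the two bulleted hypotheses of the present statement, together with whatever extra quantitative conditions are recorded in the full version (Theorem~\ref{Th22}), imply the three hypotheses there: ``$n \mid \mr{deg}(\mcL)$'' reads ``$\mcL$ has even degree''; the inequality $\sum_i(a_i^{[2]} - a_i^{[1]}) < 2g-2+r \leq \tfrac{p}{2}$ is exactly the third condition for $n = 2$, $\NN = 1$ (so that $s_1^1(-)$ is the identity on $\Xi_{2,1}^{<}$); and the parity of $r + \sum_i(a_i^{[1]} + a_i^{[2]})$ coincides with the parity of $\mr{deg}(\mcL^F_\flat) + (2g-2+r)$, i.e.\ with the condition for the square-root line bundle appearing in a rank-$2$ oper structure with logarithmic poles at the $\sigma_i$ to exist. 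Theorem~\ref{ThA} then gives an isomorphism of fibered categories $\mcU^F_{\msX^{(1)}, \omega, \mcL} \xrightarrow{\sim} \mcO p^{^\mr{Zzz...}}_{\msX, 2, \vec{a}, \mcL^F_\flat}$, so
\[
\sharp(\mcU^F_{\msX^{(1)}, \omega, \mcL}(k)) = \sharp\big(\mcO p^{^\mr{Zzz...}}_{\msX, 2, \vec{a}, \mcL^F_\flat}(k)\big),
\]
and it remains to count dormant $\mr{GL}_2^{(1)}$-opers on $\msX$ of exponent $\vec{a}$ with determinant the fixed $p$-flat line bundle $\mcL^F_\flat$.

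Next I would reduce this $\mr{GL}_2$-count to a $\mr{PGL}_2$-count (equivalently an $\mathfrak{sl}_2$-count). Projectivization carries such a dormant $\mr{GL}_2^{(1)}$-oper to a dormant $\mr{PGL}_2$-oper on $\msX$ whose radius at $\sigma_i$ is determined by the exponent $a_i = (a_i^{[1]}, a_i^{[2]})$; the only datum of $a_i$ that survives into the enumeration is the difference $a_i^{[2]} - a_i^{[1]}$, which lies in $\{1, \dots, p-1\}$ since $\vec{a} \in (\Xi_{2,1}^{<})^{\times r}$. The fibre of this projectivization over a fixed projective oper is a torsor under the finite group of dormant line bundles $\mcN$ with $\mcN^{\otimes 2}$ trivial as a flat line bundle, and bundling its cardinality together with the number of admissible square roots of the twisting line bundle --- finite by the parity hypothesis and controlled by the bound $2g-2+r \leq \tfrac{p}{2}$ --- produces an explicit numerical prefactor; tracking the sign twist induced by this identification is what ultimately yields the factor $2\cdot p^{g-1}$ and the signs $(-1)^{(j+1)(a_i^{[2]}-a_i^{[1]}+1)}$.

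Finally, using that $\msX$ is general in $\mcM_{g,r}$, I would invoke the generic enumeration of dormant $\mr{PGL}_2$-opers with prescribed radii proved in ~\cite{Wak2}, which has the shape of a pseudo-fusion (Verlinde-type) sum $\sum_{j=1}^{p-1}\big(\prod_{i=1}^r \sin(\tfrac{(a_i^{[2]}-a_i^{[1]})j\pi}{p})\big)/\sin^{2g-2+r}(\tfrac{j\pi}{p})$, up to the normalization and sign conventions fixed above. Assembling this with the prefactor from the previous step, and rearranging the sum via the reflection $j \mapsto p - j$ to install the stated signs and to absorb the powers of $2$ and $p$, yields the asserted equality; this closing step is a routine trigonometric manipulation.

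The main obstacle will not be any new geometry --- Theorem~\ref{ThA}, the Frobenius-descent dictionary and the ~\cite{Wak2} enumeration are all available --- but the precise bookkeeping: fixing the exponent-to-radius dictionary $a_i \rightsquigarrow a_i^{[2]} - a_i^{[1]}$ with the correct sign normalization, computing exactly the order of the fibre of the $\mr{GL}_2 \to \mr{PGL}_2$ projectivization (hence the constant $2p^{g-1}$), and verifying that the two bulleted hypotheses --- in particular the non-emptiness and genericity requirements underlying the ~\cite{Wak2} formula --- really suffice for every cited input.
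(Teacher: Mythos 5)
Your proposal follows essentially the same route as the paper: specialize Theorem~\ref{ThA} (i.e.\ Theorem~\ref{Prop3},~(ii)) to $(n,\NN)=(2,1)$ to identify $\mcU^F_{\msX^{(1)},\omega,\mcL}$ with $\mcO p^{^\mr{Zzz...}}_{\msX,2,\vec{a},\mcL^F_\flat}$, split off the $\mr{PGL}_2$ part via the product decomposition with $\mcR_{2,\mcL}$ (Proposition~\ref{Prop49} and Corollary~\ref{Co33}), plug in the Verlinde-type count of dormant $\mr{PGL}_2$-opers from ~\cite{Wak2} together with the generic \'{e}taleness that turns the resulting inequality into an equality, and finish with the trigonometric sign identity converting the radius parameter $2\tau(a_i^{[2]}-a_i^{[1]})+1$ into $(-1)^{(j+1)(a_i^{[2]}-a_i^{[1]}+1)}\sin((a_i^{[2]}-a_i^{[1]})j\pi/p)$. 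The only differences are cosmetic bookkeeping (where exactly the factor $2\cdot p^{g-1}$ and the signs are absorbed), which you flag yourself as the remaining verification.
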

%---------------------------------------------------------------------------------

Finally, we
remark that  parabolic bundles in positive characteristic were  previously  investigated  in 
~\cite{KuMa} and ~\cite{KuPa} (resp., ~\cite{LiSu} and ~\cite{She}) from a viewpoint of Galois coverings (resp., non-Abelian Hodge correspondence).
It should be a future task to understand how can we relate  that work to our discussion concerning parabolic Frobenius pull-back.

%%%%%%%%%%%%%%%%%%%%%%%%%%%%%%%%%%%%%%%%%
%%%%%%%%%%%%%%%%%%%%%%%%%%%%%%%%%%%%%%%%%
\subsection*{Notation and Conventions} 
Throughout the present paper, we fix a prime number $p$, 
 an algebraically closed field $k$ of characteristic $p$,  and a pair of nonnegative integers $(g, r)$.
Also, we fix a $k$-scheme  $S$ and    an $r$-pointed  smooth  curve
\begin{align}
\msX := (f : X \rightarrow S, \{ \sigma_i :S \rightarrow X\}_{i=1}^r)
\end{align}
   of genus $g$ over $S$, i.e.,   
$X$ is   a  smooth proper curve  over $S$ of genus $g$ and $\{ \sigma_i \}_{i=1}^r$ denotes  a collection of mutually disjoint $r$ marked points.
If $S'$ is an $S$-scheme, then we denote by $\msX_{S'}$ the base-change of $\msX$ over $S'$.

Denote by $D$ the effective relative divisor on $X$ (relative to $S$) defined as the union of the marked points $\{ \sigma_i \}_{i=1}^r$;
it defines a log structure on $X$, and the resulting  log scheme will be denoted by $X^\mr{log}$.

Since $X^\mr{log}/S$ is log smooth, the sheaf of logarithmic $1$-forms $\Omega_{X^\mr{log}/S}$ of $X^\mr{log}/S$, as well as its dual $\mcT_{X^\mr{log}/S} := \Omega_{X^\mr{log}/S}^\vee$, is a line bundle.
In fact, we have natural isomorphisms $\Omega_{X^\mr{log}/S} \cong \Omega_{X/S} (D)$, $\mcT_{X^\mr{log}/S} \cong \mcT_{X/S}(-D)$.
When there is no fear of confusion, we write $\Omega$ and $\mcT$ instead of $\Omega_{X^\mr{log}/S}$ and $\mcT_{X^\mr{log}/S}$, respectively. 
Also,  for  a positive integer $\NN$, we  denote by $\mcD^{(\NN -1)}_{X^\mr{log}/S}$ (cf. ~\cite[Definition 2.3.1]{Mon}) the sheaf of  logarithmic differential operators on $X^\mr{log}/S$ 
 of level $\NN -1$.

Next, write $F_S$ (resp., $F_X$) for the absolute Frobenius endomorphism of $S$ (resp., $X$).
For each positive integer $\NN$, we shall denote by $X^{(\NN)}$ the base-change  $S \times_{F_S^\NN, S} X$ of $X$ along  the $\NN$-th iterate $F_S^\NN$ of $F_S$; we will refer to it as the {\bf $\NN$-th Frobenius twist} of $X$ over $S$.
Also, the morphism $F_{X/S}^{(\NN)}  \left(:= (f, F_X^\NN) \right) : X \rightarrow X^{(\NN)}$ is called the {\bf $\NN$-th relative Frobenius morphism} of $X$ over $S$.

%%%%%%%%%%%%%%%%%%%%%%%%%%%%%%%%%%%%%%%%%
%%%%%%%%%%%%%%%%%%%%%%%%%%%%%%%%%%%%%%%%%
\section{Parabolic vector bundles} \label{S1}

In this section, we recall parabolic bundles on an algebraic curve and the stability for such objects.
There are many references discussing parabolic bundles; for example, we refer the reader to ~\cite{MaYo}, ~\cite{MeSe}. 

%%%%%%%%%%%%%%%%%%%%%%%%%%%%%%%%%%%%%%%%%
\subsection{Definition of a parabolic bundle} \label{SS1}

In Definitions \ref{Def20} and \ref{Def19} below,
we shall fix a vector bundle  $\mcF$ on $X$ of constant rank.

%------------------------------------------------------------------------------------------
\bde\label{Def20}
\begin{itemize}
\item[(i)]
Let  $i$ be  an element of 
$\{1, \cdots, r \}$, and 
we shall write $\mcF_i := \sigma_{i}^*\mcF$, which forms a vector bundle on $S$.
By a {\bf quasi-parabolic structure} of $\mcF$ at $\sigma_{i}$, we mean a sequence of $\mcO_S$-linear surjections
\begin{align} \label{Eq113}
\mff_{i} : \mcF_i  =   \mcF_{i}^{[\N_i]} 
\xrightarrow{\mff_i^{[\N_i]}}
\mcF_{i}^{[\N_i-1]} 
\xrightarrow{\mff_i^{[\N_i-1]}}
 \cdots 
 \xrightarrow{\mff_i^{[2]}}
 \mcF_{i}^{[1]} 
 \xrightarrow{\mff_i^{[1]}}
  \mcF_{i}^{[0]}=0
\end{align} 
(where $\N_i \in \mbZ_{>0}$)
between vector bundles $\mcF_i^{[j]}$ on $S$.
A {\bf parabolic structure} of $\mcF$ at $\sigma_i$   is a pair 
\begin{align} \label{Eq110}
(\mff_i, \alpha_i)
\end{align}
  consisting of a quasi-parabolic structure $\mff_i$ as in \eqref{Eq113}  and an $\N_i$-tuple of real numbers $\alpha_i :=  (\alpha^{[1]}_i, \cdots, \alpha^{[\N_i]}_i)$ with $0 \leq \alpha^{[1]}_i \leq   \cdots \leq  \alpha^{[\N_i]}_i$.
   \item[(ii)]
   A {\bf quasi-parabolic} (resp., {\bf  parabolic}) {\bf structure} of $\mcF$ is a collection of data
  \begin{align} \label{Eq109}
\vec{\mff} \ \left(\text{resp.,} \  (\vec{\mff}, \vec{\alpha})\right)
  \end{align}
  consisting of 
  an $r$-tuple $\vec{\mff} :=(\mff_1, \cdots, \mff_r)$ (resp., 
  $r$-tuples  $\vec{\mff} :=(\mff_1, \cdots, \mff_r)$ and $\vec{\alpha} := (\alpha_1, \cdots, \alpha_{r})$)
  such that, for each $i=1, \cdots, r$,
  $\mff_i$ (resp., $(\mff_i, \alpha_i)$) forms a quasi-parabolic (resp., parabolic) structure  of $\mcF$ at $\sigma_i$.
\end{itemize}
\ede
%------------------------------------------------------------------------------------------

%-----------------------------------------------------------------------------------------
\bde \label{Def19}
\begin{itemize}
\item[(i)]
Let $i$ be an element of $\{1, \cdots, r \}$ and  $\mff_i$ (resp., $(\mff_i, \alpha_i)$)  a quasi-parabolic  (resp., parabolic)  structure of $\mcF$ at $\sigma_{i}$ as in \eqref{Eq110}.
Then, 
the integer $\N_i$ is called the {\bf length} of  $\mff_i$ (resp., $(\mff_i, \alpha_i)$).
Also, 
if $S$ is connected, 
then we obtain 
the $\N_i$-tuple of nonnegative integers 
\begin{align} \label{Eq210}
\LL_i := (\LL_i^{[1]}, \cdots, \LL_i^{[\N_i]}),
\end{align}
where $\LL_i^{[j]}$ denotes the rank of the vector bundle
$\mr{Ker}(\mff_i^{[j]}
)$, and it 
is called the {\bf type} of $\mff_i$ (resp., $(\mff_i, \alpha_i)$).
\item[(ii)]
Let $\vec{\mff}$ (resp., $(\vec{\mff}, \vec{\alpha})$), where $\vec{\mff} := (\mff_1, \cdots, \mff_r)$ (resp., $\vec{\mff} := (\mff_1, \cdots, \mff_r)$ and $\vec{\alpha} := (\alpha_1, \cdots, \alpha_r)$),  be a quasi-parabolic (resp., parabolic) structure of $\mcF$.
Under the assumption that $S$ is connected, we
denote by $\LL_i$ the type of $\mff_i$ (resp., $(\mff_i, \alpha_i)$).
Then, the collections
\begin{align} \label{Eq245}
\vec{\N} := (\N_1, \cdots, \N_r)  \ \text{and} \  \vec{\LL} := (\LL_1, \cdots, \LL_r) 
\end{align}
are called  the {\bf length} and  {\bf type} of $\vec{\mff}$ (resp., $(\vec{\mff}, \vec{\alpha})$), respectively.
\end{itemize}
\ede
%-----------------------------------------------------------------------------------------

%-----------------------------------------------------------------------------------------
\bde \label{Def4453} 
\begin{itemize}
\item[(i)]
A {\bf quasi-parabolic} (resp.,  {\bf parabolic}) {\bf bundle} 
 on $\msX$ is a collection of data
\begin{align} \label{Eq201}
\msF := (\mcF, \vec{\mff}) \ \left(\text{resp.,} \ 
\msF := (\mcF,  \vec{\mff},  \vec{\alpha})\right),
\end{align}
consisting of 
 a vector bundle  $\mcF$  on $X$ of constant rank  and
a quasi-parabolic (resp., parabolic) structure $\vec{\mff}$ (resp., $(\vec{\mff}, \vec{\alpha})$)
 of $\mcF$.
If  $\mff_i := \{ \mff_i^{[j']}\}_{j'=1}^{\N_i}$ denotes 
the $i$-th factor of $\vec{\mff}$ (where  $i \in \{1, \cdots, r \}$),
then,
for each $j =1, \cdots, \N_i$,
 we 
obtain a composite surjection
\begin{align} \label{Eq202}
\varpi_{\msF, i}^{[j]} :  \mcF \twoheadrightarrow  \sigma_{i*}\mcF_{i}^{[\N_i]} 
\xrightarrow{\sigma_{i*}\mff_i^{[\N_i]}} \cdots  \xrightarrow{\sigma_{i*}\mff_i^{[j+1]}} 
\sigma_{i*}\mcF_{i}^{[j]}.
\end{align}
\item[(ii)]
Let 
$\msF$
 be a (quasi-)parabolic bundle on $\msX$ as in \eqref{Eq201}.
We shall say that $\msF$ is  {\bf of  rank $n (> 0)$}
if the vector bundle $\mcF$ has constant rank $n$.
 Also, in the resp'd portion of (i), the numbers in $\vec{\alpha}$ are  called the {\bf  parabolic weights} of $\msF$.
\end{itemize}
\ede
%-----------------------------------------------------------------------------------------

%------------------------------------------------------------------------------------------
\begin{rem} \label{Rem100}
For convenience of our discussion (cf. Definition \ref{Def66}, (i)),
we do not impose (unlike the usual definition) the condition that the parabolic weights of a parabolic bundle  is less than $1$.
\end{rem}
%------------------------------------------------------------------------------------------

Next, let  $\msF := (\mcF,  \vec{\mff}, \vec{\alpha})$ and $\msE := (\mcE,  \vec{\mfe}, \vec{\alpha})$ be two parabolic  bundles on $\msX$ with parabolic structures  having   common  length and parabolic weights 

%------------------------------------------------------------------------------------------
\bde\label{Def31}
A {\bf morphism of parabolic bundles} from $\msE$ to $\msF$ is an $\mcO_{X}$-linear morphism $h : \mcE \rightarrow \mcF$ that restricts to   a morphism   $\mr{Ker} (\varpi_{\msE, i}^{[j]}) \rightarrow \mr{Ker}(\varpi_{\msF, i}^{[j]})$ for every $i$ and $j$.
\ede
%------------------------------------------------------------------------------------------

Let us choose 
an $r$-tuple    $\vec{\alpha} := (\alpha_1, \cdots, \alpha_r)$
such that $\alpha_i$ (for each $i=1, \cdots, r$) is a collection  of real numbers $(\alpha_i^{[1]}, \cdots, \alpha_i^{[\N_i]})$ ($\N_i \in \mbZ_{> 0}$) with $0 \leq \alpha_i^{[1]} \leq  \cdots \leq  \alpha_i^{[\N_i]}$.
This $r$-tuple  determines  the category
\begin{align} \label{EQ135} 
\mcB un_{\msX, \vec{\alpha}}
\end{align}
 consisting of 
 parabolic bundles on $\msX$  
 having (length $(\N_1, \cdots, \N_r)$ and)  parabolic  weights  $\vec{\alpha}$.

%%%%%%%%%%%%%%%%%%%%%%%%%%%%%%%%%%%%%%%%%
\subsection{Stability for parabolic   bundles} \label{SS51}

Let $\msF := (\mcF,  \vec{\mff},  \vec{\alpha})$ be a parabolic  bundle
 on $\msX$ such that $(\vec{\mff},  \vec{\alpha})$ is a collection as in the resp'd portion of \eqref{Eq109}. 
Given a subbundle $\mcE$ of $\mcF$ of constant rank,
we set  $\mcF^{[j]}_{i, \mcE}$ (for each $i =1, \cdots, r$ and $j = 1, \cdots, \N_i$) to  be the image of $\mcE$ via $\varpi_{\msF, i}^{[j]}$.
Assume that   $\mcF^{[j]}_{i, \mcE}$ are all vector bundles on $S$ when restricted via $\sigma_i$.
(Of course, this assumption is automatically satisfied if $S = \mr{Spec}(k)$.)
Since
 $\mff_i^{[j]}$ restricts to a surjection
 $\mff_{i, \mcE}^{[j]} : \mcF^{[j]}_{i, \mcE} \twoheadrightarrow \mcF^{[j-1]}_{i, \mcE}$,  the set of morphisms $\mff_{i, \mcE} := \{ \mff_{i, \mcE}^{[j]} \}_{j=1}^{\N_i}$ specifies, via $\sigma_i^*(-)$, a quasi-parabolic structure on $\mcE$ at $\sigma_i$.
The  resulting collection  $\msE := (\mcE, \vec{\mff}_\mcE, \vec{\alpha})$, where
$\vec{\mff}_\mcE := (\mff_{1, \mcE}, \cdots, \mff_{r, \mcE})$,
forms a parabolic bundle on $\msX$;
such a parabolic bundle 
obtained  in this way is called {\bf parabolic subbundle} of $\msF$.

Suppose that $S$ is connected.
Each parabolic subbundle $\msE$ as above determines 
the following two real numbers:
\begin{align} \label{EQ299}
\mr{par}\text{-}
\mr{deg} (\msE) = \mr{deg} (\mcE) +
\sum_{i=1}^r\sum_{j=1}^{\N_i} \alpha_i^{[j]} \LL_{i, \mcE}^{[j]},
\hspace{8mm}
\mr{par}\text{-}
\mu (\msE) := \frac{\mr{par}\text{-}\mr{deg}(\msE)}{\mr{rk}(\mcE)},
\end{align}
where $\mr{deg}(\mcE)$ denotes the relative degree of $\mcE$ and 
$\LL^{[j]}_{i, \mcE}$ (for each $i$ and $j$) denotes
the rank of  $\mr{Ker}(\mff^{[j]}_{i, \mcE})$ (viewed as a vector bundle on $S$).
We refer to  $\mr{par}\text{-}\mr{deg} (\msE)$ and $\mr{par}\text{-}\mu (\msE)$ as, respectively,  the {\bf parabolic degree} and the 
 {\bf parabolic slope} of $\msE$.
 The formations of  
 $\mr{par}\text{-}\mr{deg} (-)$ and $\mr{par}\text{-}\mu (-)$ commute  with the base-change  over any $S$-scheme.

%------------------------------------------------------------------------------------------
\bde \label{Def44}
We shall  say  that $\msF$ is {\bf semistable} (resp., {\bf stable}) 
 if for any geometric point $t$ of $S$ and any nontrivial proper parabolic  subbundle $\msE_t$ of the fiber $\msF_t$ of $\msF$ over $t$,
 one has
  \begin{align}
  \mr{par}\text{-}
  \mu (\msE_t) \leq 
  \mr{par}\text{-}
  \mu (\msF_t) \  \left(\text{resp.,}  \ 
  \mr{par}\text{-}
  \mu (\msE_t) < 
  \mr{par}\text{-}
  \mu (\msF_t)\right).
  \end{align}
\ede
%------------------------------------------------------------------------------------------

Let $\omega := (n, \vec{\LL}, \vec{\alpha})$ be a triple  consisting of a positive integer $n$, 
an $r$-tuple    $\vec{\LL} := (\LL_1, \cdots, \LL_r)$ such that $\LL_i$ (for each $i=1, \cdots, r$)
is a sequence of nonnegative integers $(\LL_i^{[1]}, \cdots, \LL_i^{[\N_i]})$ (where $\M_i \in \mbZ_{> 0}$),
and 
a collection of parabolic weights $\vec{\alpha} := (\alpha_1, \cdots, \alpha_r)$ as before.
Then, for  a line bundle $\mcL$ on $X$,
we obtain  the category  fibered in groupoids 
\begin{align} \label{EQ322}
\overline{\mcU}_{\msX, \omega, \mcL} \ \left(\text{resp.,} \ \mcU_{\msX, \omega, \mcL} \right)
\end{align}
over the category of $S$-schemes  $\mcS ch_{/S}$  defined as follows:
the fiber  $\overline{\mcU}_{\msX, \omega, \mcL}(S')$ (resp., $\mcU_{\msX, \omega, \mcL}(S')$) over each $S$-scheme $s : S' \rightarrow S$ is 
 the groupoid of  pairs $(\msF, \eta)$,
 where
 \begin{itemize}
 \item
 $\msF$ denotes  a rank-$n$ semistable (resp., stable) parabolic bundle on $\msX_{S'}$   whose parabolic structure 
 has type $\vec{\LL}$ and
  parabolic weights $\vec{\alpha}$;
 \item
 $\eta$ denotes an isomorphism $\mr{det}(\mcF) \xrightarrow{\sim} \mcL$, where $\mcF$ is the underlying vector bundle of $\msF$.
 \end{itemize}

%%%%%%%%%%%%%%%%%%%%%%%%%%%%%%%%%%%%%%%%%
%%%%%%%%%%%%%%%%%%%%%%%%%%%%%%%%%%%%%%%%
\section{Parabolic flat bundles of higher level} \label{S51}

This section deals with a higher-level generalization of $\mcD$-modules in positive characteristic.
This concept was originally studied in ~\cite{PBer1} and later generalized in ~\cite{Mon} to the logarithmic situation.
We here recall monodromy operators  (in other words, residues) and exponents of higher level, in the manner of  
~\cite{Wak4}; the exponents of a higher-level   $\mcD$-module  describe  its local structures around  the marked points.

%%%%%%%%%%%%%%%%%%%%%%%%%%%%%%%%%%%%%%%%%
\subsection{Log flat bundles and higher-level $\mcD$-modules} \label{SS45}

Let us fix a positive integer $\NN$.
For simplicity, we use the notation
 ``$\mcD^{(\NN -1)}$"  to denote the sheaf 
 $\mcD^{(\NN -1)}_{X^\mr{log}/S}$.
For each integer $j$, we shall write $\mcD^{(\NN -1)}_{< j}$ for the subsheaf  of $\mcD^{(\NN -1)}$ consisting of logarithmic differential operators of  order $< j$.

Note that $\mcD^{(\NN -1)}$ is endowed with a structure of $\mcO_X$-module arising from left (resp., right) multiplication by sections of $\mcD_{<1}^{(\NN -1)} \left(= \mcO_X \right)$; we denote the resulting $\mcO_X$-module by ${^L}\mcD^{(\NN -1)}$ (resp., ${^R}\mcD^{(\NN -1)}$).
A {\bf (left) $\mcD^{(\NN -1)}$-module structure} on an $\mcO_X$-module $\mcF$ is a left $\mcD^{(\NN -1)}$-action $\nabla : {^L}\mcD^{(\NN -1)} \rightarrow \mcE nd_{f^{-1}(\mcO_S)} (\mcF)$ on $\mcF$ extending its $\mcO_X$-module structure.
An $\mcO_X$-module equipped with a $\mcD^{(\NN -1)}$-module structure is called a {\bf (left) $\mcD^{(\NN -1)}$-module}.

 Given a $\mcD^{(\NN -1)}$-module $(\mcF, \nabla)$, we shall write 
 $\mcS ol (\nabla)$
   for the subsheaf of $\mcF$ on which $\mcD_{+}^{(\NN -1)}$ acts as zero, where $\mcD_+^{(\NN -1)}$ denotes the kernel of the canonical projection $\mcD^{(\NN -1)} \twoheadrightarrow \mcO_X$.
 The sheaf $\mcS ol (\nabla)$ may be regarded as an $\mcO_{X^{(\NN)}}$-module via the underlying homeomorphism of $F_{X/S}^{(\NN)}$.

 Denote by $\psi$ the $p^\NN$-curvature map $\mcT^{\otimes p^\NN} \rightarrow \mcD^{(\NN -1)}$ defined in ~\cite[\S\,3.2, Definition 3.10]{Ohk}.
 Given a $\mcD^{(\NN -1)}$-module $(\mcF, \nabla)$, we shall set
 \begin{align}
 \psi_{(\mcF, \nabla)}  := \nabla \circ \psi : \mcT^{\otimes p^\NN} \rightarrow \mcE nd_{f^{-1}(\mcO_S)} (\mcF),
 \end{align}
 which will be  called the {\bf $p^\NN$-curvature} of $(\mcF, \nabla)$.
 It is well-known that the image of  $\psi_{(\mcF, \nabla)}$  lies in $\mcE nd_{\mcO_X} (\mcF)$.

%--------------------------------------------------------------------------------------------------
\bde \label{Def4981}
 By a {\bf $p^\NN$-flat bundle} on $\msX$, we shall mean a pair $(\mcF, \nabla)$ consisting of a vector bundle $\mcF$ on $X$ and a $\mcD^{(\NN -1)}$-module structure $\nabla$ on $\mcF$ with vanishing $p^\NN$-curvature.
\ede
%--------------------------------------------------------------------------------------------------

Let 
 $\mcE$ be an $\mcO_{X^{(\NN)}}$-module.
 According to  ~\cite[Corollaire 3.3.1]{Mon}, 
 there exists a  canonical $\mcD^{(\NN -1)}$-module structure
 \begin{align} \label{Eq204}
 \nabla_{\mcE}^\mr{can} : {^L}\mcD^{(\NN -1)} \rightarrow \mcE nd_{f^{-1}(\mcO_S)} ( F^{(\NN)*}_{X/S} \mcE)
 \end{align}
on the $\NN$-th Frobenius pull-back $F^{(\NN)*}_{X/S}\mcE$ uniquely determined by the condition that
every local section $(F^{(\NN)}_{X/S})^{-1}(v)$ (for $v \in \mcE$) is horizontal.
Moreover, for a relative divisor 
$D_0$ on $X/S$ with $\mr{Supp} (D_0) \subseteq \mr{Supp} (D) \left(= \bigcup_{i=1}^r \mr{Im}(\sigma_i) \right)$,
one can obtain  a $\mcD^{(\NN -1)}$-module structure
\begin{align} \label{Eq214}
 \nabla_{\mcE, D_0}^\mr{can} : {^L}\mcD^{(\NN -1)} \rightarrow \mcE nd_{f^{-1}(\mcO_S)} ( (F^{(\NN)*}_{X/S} \mcE) (D_0))
 \end{align}
 on $(F^{(\NN)*}_{X/S} \mcE) (D_0) \left(=  F^{(\NN)*}_{X/S} \mcE \otimes \mcO_X (D_0)\right)$ that is consistent with   $\nabla_{\mcE}^\mr{can}$ outside $\mr{Supp}(D_0)$.
It is immediately verified that $\nabla^\mr{can}_\mcE$ and $\nabla_{\mcE, D_0}^\mr{can}$ have  vanishing $p^\NN$-curvature.

%%%%%%%%%%%%%%%%%%%%%%%%%%%%%%%%%%%%%%%%%
\subsection{Monodromy operators} \label{SS69}

We shall set
\begin{align} \label{YY11}
\mcB_S := \bigoplus_{j \in \mbZ_{\geq 0}} \mcO_S \cdot \partial_\mcB^{\langle j \rangle},
\end{align}
where $\partial_\mcB^{\langle i \rangle}$'s are   abstract symbols (cf. ~\cite[\S\,4.2]{Wak4}).
  Here,  for  each  $j \in \mbZ_{\geq 0}$, let
   $q_j$ denote a unique nonnegative integer satisfying 
      $j = p^{\NN -1} \cdot q_j + r_j$ and $0 \leq r_j < p^{\NN -1}$.
Then, we equip $\mcB_S$ with a structure of $\mcO_S$-algebra given by 
\begin{align} \label{YY12}
\partial_\mcB^{\langle j' \rangle} \cdot \partial_\mcB^{\langle j''\rangle} := \sum_{j = \mr{max}\{j', j'' \}}^{j' + j''} \frac{j!}{(j' + j''-j)! \cdot (j-j')! \cdot (j-j'')!}\cdot \frac{q_{j'}! \cdot q_{j''}!}{q_j !} \cdot \partial_\mcB^{\langle j\rangle}.
\end{align}
The  $\mcO_S$-algebra $\mcB_S$ is commutative and  generated by the  global sections $\partial_\mcB^{\langle 1 \rangle}, \partial_\mcB^{\langle p \rangle}, \cdots, \partial_\mcB^{\langle p^{\NN -1}\rangle}$.

Let $\mcG$ be an $\mcO_S$-module and $\mu$ a morphism of $\mcO_S$-algebras 
$\mcB_S \rightarrow \mcE nd_{\mcO_S} (\mcG)$.
For each $a \in \mbZ_{\geq 0}$,
we set 
$\mu^{\langle p^a \rangle} := \mu  (\partial_\mcB^{\langle p^a \rangle})$, which is an element of $\mr{End}_{\mcO_S}(\mcG)$.
The morphism $\mu$ is uniquely determined by  
the $\NN$-tuple
\begin{align} \label{e187}
\mu^{\langle \bullet \rangle} := (\mu^{\langle 1 \rangle}, \mu^{\langle p \rangle} \cdots,  \mu^{\langle p^{\NN -1} \rangle}) \in \mr{End}_{\mcO_S}(\mcG)^{\oplus \NN}.
\end{align} 

Next, let $(\mcF, \nabla)$ be a $\mcD^{(\NN -1)}$-module, and choose  $i \in \{ 1, \cdots, r \}$.
Note that there exists a canonical isomorphism of $\mcO_S$-algebras
$\sigma_i^* ({^L}\mcD^{(\NN -1)}) \xrightarrow{\sim} \mcB_S$ such that
the local basis 
$\{ \partial^{\langle j \rangle}\}_{j \in \mbZ_{\geq 0}}$ of ${^L}\mcD^{(\NN -1)}$ associated to 
 any logarithmic coordinate around  a point  in $\mr{Im}(\sigma_i)$
 is mapped to $\{ \partial_\mcB^{\langle j \rangle} \}_{j \in \mbZ_{\geq 0}}$ (cf. ~\cite[Lemma 2.2.2 and \S\,6.1]{Wak4}).
The  $\mcD^{(\NN -1)}$-module structure  $\DMO$ induces  an $\sigma_i^*({^L}\mcD^{(\NN -1)})$-action $\sigma_i^*\nabla$  on $\sigma_i^*\mcF$; it gives 
the composite
\begin{align} \label{EQ400}
\mu_i (\nabla) : \mcB_S \xrightarrow{\sim} \sigma_i^*({^L}\mcD^{(\NN -1)}) \xrightarrow{\sigma_i^* \nabla} \mcE nd_{\mcO_S} (\sigma_i^* \mcF),
\end{align}
and  the pair $(\sigma_i^*\mcF, \mu_i (\nabla))$ specifies a $\mcB_S$-module. 
In particular, we obtain 
\begin{align} \label{e18d7}
\mu_i (\DMO)^{\langle \bullet \rangle} := (\mu_i (\DMO)^{\langle 1 \rangle}, \mu_i (\DMO)^{\langle p \rangle} \cdots,  \mu_i (\DMO)^{\langle p^{\NN -1} \rangle}) \in \mr{End}_{\mcO_S}(\sigma_i^*\mcF)^{\oplus \NN}.
\end{align} 
Since $\mcB_S$ is commutative,
the elements $\mu_i (\DMO)^{\langle 1 \rangle}, \cdots, \mu_i (\DMO)^{\langle p^{\NN -1} \rangle}$ commute with each other.
We refer to $\mu_i (\nabla)$ and  $\mu_i (\DMO)^{\langle \bullet \rangle}$ as the {\bf monodromy operator} of $\nabla$ at $\sigma_i$.
In the case where $\mcF$ is a line bundle, 
$\mu_i (\nabla)^{\langle \bullet \rangle}$ may be regarded as an element of $\Gamma (S, \mcO_S)^{\oplus \NN} \left(= \mr{End}_{\mcO_S} (\sigma_i^* \mcF)^{\oplus \NN} \right)$.

%%%%%%%%%%%%%%%%%%%%%%%%%%%%%%%%%%%%%%%%%
\subsection{Parabolic $p^\NN$-flat bundles} \label{SS81}

In what follows, we generalize the classical definition of a parabolic connection to our situation.

%--------------------------------------------------------------------------
\bde \label{Def9}
Let $\msF := (\mcF, \vec{\mff})$
  be a quasi-parabolic 
   bundle on $\msX$.
A {\bf $\mcD^{(\NN -1)}$-module structure} on $\msF$ is a $\mcD^{(\NN -1)}$-module structure  $\nabla$ on $\mcF$ 
such that $\mr{Ker}(\varpi_{\msF, i}^{[j]})$ is closed under the $\mcD^{(\NN -1)}$-action $\nabla$ for every $i =1, \cdots, r$ and $j = 0, \cdots, \N_i$.
\ede
%--------------------------------------------------------------------------

Let $\msF := (\mcF, \vec{\mff})$ be a quasi-parabolic bundle on $\msX$ and  $(\N_1, \cdots, \N_r)$   the length of $\vec{\mff} = (\mff_1, \cdots, \mff_r)$.
Also, let $\nabla$ be  a $\mcD^{(\NN -1)}$-module structure on $\msF$.
Since 
$\mr{Ker}(\varpi_{\msF, i}^{[j]})$
 (for each $i=1, \cdots, r$ and $j =1, \cdots \N_i$)
 is closed under $\nabla$,
we see that 
$\nabla$ induces a $\mcB_S$-module structure
\begin{align} \label{EQ52}
\mu_i^{[j]} (\nabla) : \mcB_S \rightarrow \mcE nd_{\mcO_S} (\mr{Ker}(\mff_i^{[j]}))
\end{align}
 on
$\mr{Ker}(\mff_i^{[j]}) \left(\cong \mr{Ker}(\varpi_{\msF, i}^{[j-1]})/\mr{Ker}(\varpi_{\msF, i}^{[j]}) \right)$; this  corresponds to 
\begin{align} \label{EQ77}
\mu_i^{[j]} (\nabla)^{\langle \bullet \rangle} := (\mu_i^{[j]} (\nabla)^{\langle 1 \rangle}, \mu_i^{[j]} (\nabla)^{\langle p \rangle}, \cdots, \mu_i^{[j]} (\nabla)^{\langle p^{\NN -1} \rangle}) \in \mr{End}_{\mcO_S}  (\mr{Ker}(\mff_i^{[j]}))^{\oplus \NN}.
\end{align}

Here, for each $\N \in \mbZ_{> 0}$,
we shall denote by
\begin{align} \label{EQ401}
\Xi_{\N, \NN}^{\leq}  \ \left(\text{resp.,} \  \Xi_{\N, \NN}^{<} \right)
\end{align}
the set of 
  $\N$-tuples of  integers $(a^{[1]}, \cdots, a^{[\N]})$ with  $0 \leq a^{[1]} \leq  \cdots \leq a^{[\N]} < p^\NN$ (resp., $0 \leq a^{[1]} < \cdots < a^{[\N]} < p^\NN$).
Also, given   an element $d$ of $\mbZ/p^{\NN}\mbZ$, we  
 denote by $\widetilde{\EX}$ the  integer defined as the unique lifting of $d$ via the natural surjection $\mbZ \twoheadrightarrow \mbZ/p^{\NN}\mbZ$ satisfying $0 \leq \widetilde{\EX} <p^{\NN}$.
Let $\widetilde{\EX}_{[0]}, \cdots, \widetilde{\EX}_{[\NN -1]}$
be 
the  collection of  integers uniquely determined by the condition that    $\widetilde{d}= \sum\limits_{s=0}^{\NN -1}p^s \cdot \widetilde{d}_{[s]}$ and $0 \leq \widetilde{d}_{[s]} <p$ ($s = 0, \cdots, \NN -1$).
For  each  $s= 0, \cdots, \NN -1$, we write
 $\widetilde{\EX}_{[0, s]} := \sum\limits_{s'=0}^{s} p^{s'} \cdot \widetilde{d}_{[s']}$ and write
$d_{[s]}$ (resp., $d_{[0, s]}$) for  the image of $\widetilde{d}_{[s]}$ (resp., $\widetilde{d}_{[0, s]}$) via the natural projection $\mbZ \twoheadrightarrow  \mbF_p$ (resp., $\mbZ \twoheadrightarrow \mbZ/p^{s+1}\mbZ$).

%--------------------------------------------------------------------------
\bde\label{Def66}
\begin{itemize}
\item[(i)]
Let us consider a collection of data
\begin{align}
\msF_\flat := (\mcF, \nabla,  \vec{\mff}, \vec{a})
\end{align}
where 
\begin{itemize}
\item
$(\mcF, \vec{\mff}, \vec{a})$ denotes a parabolic bundle on $\msX$ such that $\vec{a}$ belongs to $\prod_{i=1}^r \Xi_{\N_i, \NN}^{\leq}$ for some $(\N_1, \cdots, \N_r) \in \mbZ_{> 0}^{\times r}$ (in particular,  $(\N_1, \cdots,  \N_r)$ coincides with  the length of $\vec{\mff}$);
\item
$\nabla$ denotes a $\mcD^{(\NN -1)}$-module structure on $(\mcF, \vec{\mff})$ with $\psi_{(\mcF, \nabla)}= 0$ (resp.,  $(\psi_{(\mcF, \nabla)})^{p-1} =0$).
\end{itemize}
Then, we say that $\msF_\flat$ is a {\bf parabolic $p^\NN$-flat} (resp., {\bf parabolic $p^\NN$-nilpotent}) {\bf bundle} on $\msX$
if,    for any  $i=1, \cdots, r$,  $j =1, \cdots, \N_i$,  and $s = 0, \cdots, \NN -1$,
the element $\mu_i^{[j]} (\nabla)^{\langle p^s \rangle} \in \mr{End}_{\mcO_S} (\mr{Ker}(\mff_i^{[j]}))$ coincides with the multiplication by $(-a_i^{[j]})_{[s]} \in \mbF_p$.

The notion of a morphism between parabolic $p^\NN$-flat (resp., parabolic $p^\NN$-nilpotent) bundles can be formulated naturally.
\item[(iii)]
Let $\msF_\flat := (\mcF, \nabla, \vec{\mff}, \vec{a})$ be a parabolic $p^\NN$-flat bundle on $\msX$.
By a {\bf parabolic $p^\NN$-flat subbundle} of $\msF_\flat$,
we mean a parabolic $p^\NN$-flat bundle of the form $\msE_\flat := (\mcE, \nabla |_\mcE, \vec{\mff}_\mcE, \vec{a})$, where $(\mcE, \vec{\mff}_\mcE, \vec{a})$ is a parabolic subbundle of $\msF := (\mcF, \vec{\mff}, \vec{a})$  such that $\mcE$ is closed under the $\mcD^{(\NN -1)}$-action $\nabla$, and $\nabla |_\mcE$ denotes the resulting $\mcD^{(\NN -1)}$-action induced  by $\nabla$. 
\end{itemize}
\ede
%--------------------------------------------------------------------------

Each element  $\vec{a} := (a_1, \cdots, a_r)$ of $\prod_{i=1}^r \Xi_{\N_i, \NN}^{\leq}$ (where $\N_1, \cdots, \N_r \in \mbZ_{>0}$) determines  
 the category
\begin{align} \label{EQ150}
\mcD^{(\NN -1)}\text{-}\mcB un^{\psi =0}_{\msX, \vec{a}} 
\ \left(\text{resp.,} \   \mcD^{(\NN -1)}\text{-}\mcB un^{\mr{nilp}}_{\msX, \vec{a}} \right)
\end{align}
 of  parabolic $p^\NN$-flat 
 (resp., $p^\NN$-nilpotent) 
 bundles  on $\msX$ of (length $(\N_1, \cdots, \N_r)$ and) parabolic weights $\vec{a}$.

%%%%%%%%%%%%%%%%%%%%%%%%%%%%%%%%%%%%%%%%%
\subsection{Exponent of a $p^\NN$-flat bundle} \label{SS49}

Consider the formal disc
 \begin{align} \label{Eq233}
 U := \mcS pec (\mcO_S [\! [t]\!])
 \end{align}
 over $S$, where $t$ denotes a formal parameter; it has a closed immersion $\sigma_\circ : S \hookrightarrow U$ determined by ``$t = 0$".
 The pair 
 \begin{align} \label{EQw34}
 \msU := (U, \sigma_\circ)
 \end{align}
  may be regarded as a ``{\it local curve}" over $S$.  
 In fact, 
 the formal completion of $\msX$ along  the marked point  is, locally on $S$,  isomorphic to $U$.

 Note that various formulations and definitions 
 discussed so far are available when $\msX$ is replaced with  $\msU$.
 In particular, if
  $U^\mr{log}$ denotes the log scheme $U$ equipped with the log structure given by the relative divisor $D_\circ := \mr{Im}(\sigma_\circ)$, then we obtain  $\mcD^{(\NN -1)}_\circ := \mcD^{(\NN -1)}_{U^\mr{log}/S}$.
 The $\mcO_{U}$-module  ${^L}\mcD_{\circ}^{(\M)}$ decomposes into  the direct sum 
$\bigoplus_{j \in \mbZ_{\geq 0}} \mcO_U \cdot \partial^{\langle j \rangle}_\circ$, where 
$\{ \partial^{\langle j \rangle}_\circ \}_j$ is the basis associated to the logarithmic  coordinate  determined by $t$.

For $d \in \mbZ /p^\NN \mbZ$, there exists a unique $\mcD_\circ^{(\NN -1)}$-module structure 
\begin{align} \label{YY52}
\DMO_{\EX}: \mcD_{\circ}^{(\NN -1)} \rightarrow \mcE nd_{\mcO_S} (\mcO_{U})
\end{align}
on $\mcO_{U}$  determined by the condition that
$\DMO_{\EX} (\partial^{\langle j \rangle}_\circ) (t^n) = q_j ! \cdot \binom{n -\widetilde{\EX}}{j}\cdot t^n$
   for every $j$, $n \in \mbZ_{\geq 0}$.
 The resulting $\mcD_{\circ}^{(\NN -1)}$-module 
\begin{align} \label{UU3}
\msO_{d, \flat}
   := (\mcO_{U}, \DMO_{\EX})
\end{align}
 is isomorphic  to the unique extension of 
 $\msO_{0, \flat}$
   to $t^{-\widetilde{\EX}}\cdot \mcO_{U} \left(\supseteq \mcO_U \right)$.
In particular,  $\DMO_{\EX}$ has vanishing $p^{\NN}$-curvature.
Also, it follows from ~\cite[Proposition 4.3.2]{Wak4} that 
the equality 
\begin{align} \label{Eq228}
\mu (\DMO_{\EX})^{\langle \bullet \rangle} = ((-\EX)_{[0]}, \cdots, (-\EX)_{[\NN -1]})
\end{align}
holds under the natural identification $\mr{End}_{\mcO_S} (\sigma_\circ^* \mcO_U) = \Gamma (S, \mcO_S) \left(\supseteq \mbF_p \right)$.

Let $(\mcF, \nabla)$ be a $\mcD_\circ^{(\NN -1)}$-module with vanishing $p^\NN$-curvature such that $\mcF$ is a vector bundle of rank $n > 0$.
Suppose first  that $S = \mr{Spec}(R)$ for a local ring $(R, \mfm)$ over $k$ with $R/\mfm = k$.
According to  ~\cite[Proposition-Definition 4.4.1]{Wak4},  there exists an isomorphism of $\mcD_\circ^{(\NN -1)}$-modules
\begin{align} \label{Eq234}
\bigoplus_{j=1}^n \msO_{d^{[j]}, \flat} \xrightarrow{\sim} (\mcF, \nabla)
\end{align}
for some $d^{[1]}, \cdots, d^{[n]}\in \mbZ/p^\NN \mbZ$;
the multiset $[d^{[1]}, \cdots, d^{[n]}]$ depends only on the isomorphism class of $(\mcF, \nabla)$.
Even in the case where  $S$ is an arbitrary  connected $k$-scheme (that is not necessarily the spectrum of a local ring), 
we can find a well-defined  multiset  of integers
\begin{align} \label{EQ177}
e (\nabla) := [a^{[1]}, \cdots, a^{[n]}]
\end{align}
in $\{0, 1, \cdots, p^\NN-1 \}$ satisfying the following condition:
 if  $d^{[j]}$ denotes   the image of $a^{[j]}$ via $\mbZ \twoheadrightarrow \mbZ /p^\NN \mbZ$ (hence $\widetilde{d}^{[j]} = a^{[j]}$), then
$(\mcF, \nabla)$ admits an isomorphism as in \eqref{Eq234} when restricted over the spectrum of the local ring determined by every closed point of $S$.

%-------------------------------------------------------------------------
\bde\label{Def67}
We refer to  $e (\nabla)$ as the {\bf exponent} of $(\mcF, \nabla)$ (or, of $\nabla$).
(This definition is equivalent to the one discussed in ~\cite[Proposition-Definition 4.4.1]{Wak4} under the identification $d \leftrightarrow \widetilde{d}$.)
\ede
%-------------------------------------------------------------------------

%-------------------------------------------------------------------------
\bde\label{Def78}
Let $(\mcF, \nabla)$ be a $p^\NN$-flat bundle on $\msX$ and $i$  an element of $\{1, \cdots, r\}$.
Also, let $e$ be a multiset of elements in $\{ 0, \cdots, p^\NN -1 \}$ with cardinality $n$.
Then,  we say that
{\bf $(\mcF, \nabla)$  has exponent $e$ at $\sigma_i$}
if, locally on $S$, 
the restriction of $(\mcF, \nabla)$ to the formal completion $\msU'$
of $X$ along  $\mr{Im}(\sigma_i)$
 has  exponent $e$ in the sense of Definition \ref{Def67} under some (and hence, any) identification $\msU' \cong \msU$.
\ede
%-------------------------------------------------------------------------

%%%%%%%%%%%%%%%%%%%%%%%%%%%%%%%%%%%%%%%%%
%%%%%%%%%%%%%%%%%%%%%%%%%%%%%%%%%%%%%%%%%
\section{Frobenius pull-back of parabolic bundles} \label{S28}

In this section,
we introduce the Frobenius pull-back of a parabolic bundle, generalizing the usual definition for  the non-parabolic case.  
The main theorem of this section shows that this operation yields a bijective correspondence between  parabolic bundles on $\msX^{(\NN)}$ and parabolic $p^\NN$-flat bundles on $\msX$ (cf. Theorem \ref{Prop11}).

%%%%%%%%%%%%%%%%%%%%%%%%%%%%%%%%%%%%%%%%%
\subsection{Parabolic Frobenius pull-back in a local situation I} \label{SS31}

As a first step, we  consider the case where the underlying space is the $\NN$-th  Frobenius twist  $\msU^{(\NN)} := (U^{(\NN)}, \sigma_\circ^{(\NN)})$ of $\msU = (U, \sigma_\circ)$ (cf. \eqref{Eq233}) and $S$ is connected.

 Let $\N_\circ$ be a positive integer and 
  $a_\circ := (a_\circ^{[1]}, \cdots, a_\circ^{[\N_\circ]})$ an element of $\Xi_{\N_\circ, \NN}^{\leq}$.
We shall set   
\begin{align} \label{EQR3}
a_\circ /p^\NN := 
(a_\circ^{[1]}/p^\NN, \cdots, a_\circ^{[\N_\circ]}/p^\NN).
\end{align}
Just as in the case of $\msX$,  both the categories $\mcB un_{\msU^{(\NN)},  a_\circ/p^\NN}$ and $\mcD^{(\NN -1)}\text{-}\mcB un_{\msU,  a_\circ}$ 
 for this local situation  can be defined in the same manner.

First, we construct 
an object of  $\mcD^{(\NN -1)}\text{-}\mcB un_{\msU, a_\circ}$ by using 
an object of  $\mcB un_{\msU^{(\NN)}, a_\circ/p^\NN}$.
Let us take a parabolic bundle $\msE := (\mcE, \mfe_\circ, a_\circ/p^\NN)$ classified by 
$\mcB un_{\msU^{(\NN)}, a_\circ/p^\NN}$;
the  quasi-parabolic structure  $\mfe_\circ$ is given by
\begin{align} \label{EQ44}
\mfe_{\circ} : \sigma_\circ^*\mcE  =   \mcE_\circ^{[\N_\circ]} 
\xrightarrow{\mfe_\circ^{[\N_\circ]}}
\mcE_{\circ}^{[\N_\circ-1]} 
\xrightarrow{\mfe_\circ^{[\N_\circ-1]}}
 \cdots 
 \xrightarrow{\mfe_\circ^{[2]}}
 \mcE_\circ^{[1]} 
 \xrightarrow{\mfe_\circ^{[1]}}
  \mcE_\circ^{[0]}=0.
\end{align} 
For simplicity, we write $\mcG^{[j]} := \sigma_{\circ *}\mcE_\circ^{[j]}$ ($j =0, \cdots, m_\circ$).
For  $j \in 1, \cdots, \N_\circ$,
the natural surjection $\varpi_{\msE}^{[j]} :\mcE \twoheadrightarrow \mcG^{[j]}$ induces,  via pull-back along $F^{(\NN)}_{U/S}$, a surjective morphism of $\mcD^{(\NN -1)}$-modules  
$F^{(\NN)*}_{U/S}\varpi_{\msE}^{[j]} : (F^{(\NN)*}_{U/S}\mcE, \nabla_\mcE^\mr{can}) \twoheadrightarrow (F^{(\NN)*}_{U/S}\mcG^{[j]}, \nabla_{\mcG^{[j]}}^\mr{can})$.
Then, we obtain a composite surjection between $\mcD^{(\NN -1)}$-modules
\begin{align}
\widetilde{\varpi}_{\msE}^{[j]}  : & \   
((F^{(\NN)*}_{U/S}\mcE)(p^\NN \cdot D_\circ), \nabla_{\mcE, p^\NN \cdot D_\circ}^\mr{can})  \\ 
&
\twoheadrightarrow
((F^{(\NN)*}_{U/S}\mcG^{[j]})(p^\NN \cdot D_\circ), \nabla_{\mcG^{[j]}, p^\NN \cdot D_\circ}^\mr{can})
\notag \\
&\twoheadrightarrow ((F^{(\NN)*}_{U/S}\mcG^{[j]})(p^\NN \cdot D_\circ), \nabla_{\mcG^{[j]}, p^\NN \cdot D_\circ}^\mr{can}) \otimes  (\mcO_U/ \mcO_U ((a^{[j]}_\circ -p^\NN)\sigma_\circ), \overline{\nabla}_0), \notag
\end{align}
where 
$\overline{\nabla}_{0}$ denotes the $\mcD^{(\NN -1)}$-module structure  on $\mcO_U/ \mcO_U ((a_\circ^{[j]}-p^\NN)\sigma_\circ)$ induced by
$\nabla_0$,
the first arrow is  obtained by extending $F^{(\NN)*}_{U/S}\varpi_{\msE}^{[j]}$, 
 and the second arrow arises from the natural quotient $(\mcO_U, \nabla_0) \twoheadrightarrow (\mcO_U/ \mcO_U ((a_\circ^{[j]}-p^\NN)\sigma_\circ), \overline{\nabla}_0)$. 
Hence, the kernel of this composite determines a $\mcD^{(\NN -1)}$-module 
$(\mr{Ker}(\widetilde{\varpi}_{\msE}^{[j]}), \nabla^{[j]}_\mr{Ker})$.
By applying this construction to various $j$'s and taking their intersection, we obtain a $\mcD^{(\NN -1)}$-module
\begin{align} \label{EQ102}
\msE^F = (\mcE^F, \nabla^F) := \bigcap_{j=1}^{\N_\circ} (\mr{Ker}(\widetilde{\varpi}_{\msE}^{[j]}), \nabla^{[j]}_\mr{Ker}).
\end{align}
Its restriction to $U_\circledast  := U \setminus \mr{Im}(\sigma_\circ)$ may be identified with the usual  Frobenius pull-back of $\mcE$ with the canonical connection.
This implies that $\nabla^F$ has vanishing $p^\NN$-curvature.

Moreover, 
we shall  set
\begin{align} \label{EQ101}
\mcE^{F, [j]}_\circ := 
\sigma^*_\circ\left(\mcE^F/(\mcE^F  \cap \mr{Ker}(F^{(\NN)*}_{U/S}\varpi_{\msE}^{[j]})(-D_\circ)) \right).
\end{align}
Then, there exists a sequence of natural surjections
\begin{align} \label{EQ100}
\mfe^F_{\circ} : \mcE^{F, [\N_\circ]}_\circ \xrightarrow{\mfe_\circ^{F, [\N_\circ]}} \mcE^{F, [\N_\circ-1]}_\circ \xrightarrow{\mfe_\circ^{F, [\N_i-1]}} \cdots  \xrightarrow{\mfe_\circ^{F, [2]}} \mcE^{F, [1]}_\circ
\xrightarrow{\mfe_\circ^{F,  [1]}}
 \mcE^{F, [0]}_\circ = 0.
\end{align}

%------------------------------------------------------------------------------------------
\bpr \label{Prop10}
  The resulting collection
 \begin{align} \label{EQ111}
 \msE_\flat^F := (\mcE^F, \nabla^F, \mfe^F_\circ, a_\circ)
 \end{align}
 forms a parabolic $p^\NN$-flat bundle on $\msU$ 
 classified by $\mcD^{(\NN -1)}\text{-}\mcB un_{\msU, a_\circ}^{\psi = 0}$,
 and  the equality $\mcS ol (\nabla^F) = \mcE$ holds.
 Moreover, 
 if 
   $\LL_\circ  := (\LL_\circ^{[1]}, \cdots, \LL_\circ^{[\N_\circ]})$ denotes   the type of $\mfe_\circ$,
then
\begin{align} \label{EQ110}
(\mr{det}(\mcE^F), \mr{det}(\nabla^F)) \cong
((F^{(\NN)*}_{U/S}\mr{det}(\mcE))(s D_\circ), \nabla^\mr{can}_{\mr{det}(\mcE), s D_\circ}),
\end{align}
where $s := \sum\limits_{j=1}^{\N_\circ} a_\circ^{[j]} \LL^{[j]}_\circ$.
\epr
%------------------------------------------------------------------------------------------
\begin{proof}
We shall prove the first assertion.
Let us  set 
 \begin{align} \label{EQ410}
 \mcH := \bigoplus_{j' =1}^{\N_\circ} \mcO_{U^{(\NN)}}^{\oplus \LL_\circ^{[j']}}
 \hspace{5mm}  \text{and} \hspace{5mm}
 \mcH_\circ^{[j]} := \bigoplus_{j'=1}^j\mcO_S^{\oplus \LL_\circ^{[j']}} \  (j=0, \cdots, \N_\circ).
 \end{align}
 In particular, we have $\sigma_\circ^* \mcH = \bigoplus_{j' =1}^{\N_\circ} \mcO_{S}^{\oplus \LL_\circ^{[j']}}$.
 For each $j=1, \cdots, \N_\circ$, denote by $\mfh_\circ^{[j]}$
 the projection $\mcH_\circ^{[j]} \twoheadrightarrow \mcH_\circ^{[j-1]}$ given by forgetting the last direct summand.
 The triple 
 \begin{align} \label{EQ414}
 \msH := (\mcH, \mfh_\circ, a_\circ/p^\NN),
 \end{align}
  where $\mfh_\circ := \{ \mfh_\circ^{[j]} \}_{j=1}^{\N_\circ}$, specifies a parabolic bundle on $\msU^{(\NN)}$.
 Since  the assertion
 is of local nature,
 we can suppose  that there exists an isomorphism  $\tau :  \msE   \xrightarrow{\sim}  \msH$.
 It follows that  the problem can be  reduced to the case of 
$\msE = \msH$.

 For each $j = 1, \cdots, \N_\circ$,
we have 
\begin{align} \label{EQ118}
(\mr{Ker}(\widetilde{\varpi}_{\msH}^{[j]}), \nabla_{\mr{Ker}}^{[j]}) = \bigoplus_{j' =1}^{j} (\mcO_U ( a^{[j']}_\circ  D_\circ), \nabla_0)^{\oplus \LL_\circ^{[j']}} \oplus \bigoplus_{j' = j+1}^{\N_\circ} (\mcO_U ( p^\NN  D_\circ), \nabla_0)^{\oplus \LL^{[j']}_\circ},
\end{align}
where we abuse notation by  writing  $\nabla_0$  for  the restrictions of the trivial $\mcD^{(\NN -1)}$-module structure  on $\mcO_U$ to various $\mcO_X$-submodules.
This implies 
\begin{align} \label{EQ415}
\msH^F = \bigoplus_{j' =1}^{\N_i}   (\mcO_U ( a^{[j']}_\circ  D_\circ), \nabla_0)^{\oplus \LL_{j'}},
\hspace{5mm} 
\mcH_\circ^{F[j]} = \bigoplus_{j' =1}^j
\sigma_\circ^*\left(\mcO_U ( a^{[j']}_\circ  D_\circ)^{\oplus \LL_{j'}}\right)
\end{align}
($j=0, \cdots, \N_\circ$), and the projection  $\mfh_\circ^{F[j]}$ is given by forgetting the last direct summand.
Thus,  $(\mcE^F, \mfh_\circ^F)$ forms a quasi-parabolic bundle on $\msU$.
Moreover, 
since $(\mcO_U (a_\circ^{[j]}D_\oslash), \nabla_0) \cong (\mcO_U, \nabla_{a_\circ^{[j]}})$,
 the description of $\nabla_{\mr{Ker}}^{[j]}$ obtained in \eqref{EQ118} and the equality  \eqref{Eq228}  together imply that
$\msH^F_\flat := (\mcE^F, \nabla^F, \mfh_\circ^F, a_\circ)$ belongs to $\mcD^{(\NN -1)}\text{-}\mcB un_{\msU, a_\circ}$.
This completes the proof of  the first assertion.

The second assertion  follows from the definition of $(\mcE^F, \nabla^F)$ together with the discussion in the proof of the first assertion.
\end{proof}
%------------------------------------------------------------------------------------------

Note that the formation of $\msE \mapsto \msE^F_\flat$ is functorial.
This is to say, 
if $h : \msE \rightarrow \msE'$ is a morphism in $\mcB un_{\msU^{(\NN)}, a_\circ/p^\NN}$,
then this yields  naturally a morphism $h^F : \msE^F_\flat \rightarrow {\msE'_\flat}^F$.  
Thus, the  assignments $\msE \mapsto \msE_\flat^F$ and $h \mapsto h^F$ together  define a functor
$\mcB un_{\msU^{(\NN)}, a_\circ/p^\NN}\rightarrow \mcD^{(\NN -1)}\text{-}\mcB un_{\msU, a_\circ}^{\psi = 0}$.

%%%%%%%%%%%%%%%%%%%%%%%%%%%%%%%%%%%%%%%%%
\subsection{Parabolic Frobenius pull-back in a local situation  II} \label{SS50}

Next, we give an inverse direction of the above construction.
Let  $\msF_\flat := (\mcF, \nabla, \mff_\circ, a_0)$ (where $\msF := (\mcF, \mff_\circ)$, $\mff_\circ := \{ \mff_\circ^{[j]} \}_{j=1}^{\N_\circ}$) be a parabolic $p^\NN$-flat bundle  on $\msU$ classified by $\mcD^{(\NN -1)}\text{-}\mcB un_{\msU, a_\circ}^{\psi = 0}$.
We shall set $\mcF^\nabla := \mcS ol (\nabla)$, 
which specifies a rank-$n$ vector bundle on $U^{(\NN)}$.
For each $j=0, \cdots, \N_\circ$,
 $\nabla$ restricts to a $\mcD^{(\NN -1)}$-module structure  $\nabla^{[j]}$ on $\mr{Ker} (\varpi_{\msF}^{[j]}) (-(a_\circ^{[j]} + 1) D_\circ)$.
Then, $\mcS ol (\nabla^{[j]})$ specifies a vector bundle on $U^{(\NN)}$ equipped with a natural inclusion $\mcS ol (\nabla^{[j]}) \hookrightarrow \mcS ol (\nabla)$, which becomes an isomorphism when restricted to $U^{(\NN)} \setminus \mr{Im} (\sigma_\circ^{(\NN)})$.
The $\mcO_S$-modules $\mcF^{\nabla [j]}_\circ  := \sigma_\circ^{(\NN)*} (\mcS ol (\nabla)/ \mcS ol (\nabla^{[j]}))$ fit into a sequence of surjections
\begin{align} \label{EQ129}
\mff_{\circ}^\nabla :
\sigma_\circ^{(\NN)*}\mcF^\nabla  =   \mcF_\circ^{\nabla [\N_\circ]} 
\xrightarrow{\mff_\circ^{\nabla[\N_\circ]}}
\mcF_{\circ}^{\nabla [\N_\circ-1]} 
\xrightarrow{\mff_\circ^{\nabla [\N_\circ-1]}}
 \cdots 
 \xrightarrow{\mff_\circ^{\nabla [2]}}
 \mcF_\circ^{\nabla [1]} 
 \xrightarrow{\mff_\circ^{\nabla [1]}}
  \mcF_\circ^{\nabla [0]}=0.
\end{align} 
Then, we can prove the following assertion.

%------------------------------------------------------------------------------------------
\bpr\label{Prop52}
\begin{itemize}
\item[(i)]
The collection
\begin{align}
\msF^{\nabla}  := (\mcF^\nabla, \mff^\nabla_\circ, a_\circ/p^\NN) 
\end{align}
forms a parabolic bundle on $\msU^{(\NN)}$ classified by $\mcB un_{\msU^{(\NN)}, a_\circ/p^\NN}$.
\item[(ii)]
Let $\msE$
and $\msF_\flat$ be objects in $\mcB un_{\msU^{(\NN)}, a_\circ/p^\NN}$ and $\mcD^{(\NN -1)}\text{-}\mcB un_{\msU, a_\circ}^{\psi = 0}$, respectively.
Then,  there exist canonical isomorphisms
\begin{align} \label{EQ160}
\msE \xrightarrow{\sim} (\msE_\flat^F)^\nabla,
\hspace{5mm} 
(\msF^\nabla)^F_\flat \xrightarrow{\sim} \msF_\flat
\end{align}
that are functorial with respect to $\msE$ and $\msF_\flat$, respectively.
Moreover, when restricted to $U \setminus \mr{Im}(\sigma_\circ)$,  these isomorphisms  coincide with the natural isomorphisms  
$\mcE \xrightarrow{\sim} (F^{(\NN)*}_{U/S}\mcE)^\nabla$ and $F^{(\NN)*}_{U/S}(\mcF^\nabla) \xrightarrow{\sim} \mcF$, respectively.
\end{itemize}
\epr
%------------------------------------------------------------------------------------------
\begin{proof}
First, we shall prove assertion (i).
Denote by $\LL_\circ := (\LL_\circ^{[1]}, \cdots, \LL_\circ^{[\N_\circ]})$ the type of $\mff_\circ$.
The assertion  is of local nature, 
so  $\msF := (\mcF, \nabla)$ may be identified with a direct sum of $\msO_{d, \flat}$'s (cf. the discussion around \eqref{Eq234}).
Since $(\mcF, \nabla)$ has parabolic  weights $a_0$,
the equality \eqref{Eq228} implies that the problem is reduced to the situation where $\msF$ satisfies the following conditions:
\begin{itemize}
\item
 $\msF$ coincides with  the direct sum
\begin{align} \label{EQ416}
\msF = \bigoplus_{j=1}^{\N_\circ}  (\mcO_U (a_\circ^{[j]} D_\circ), \nabla_0)^{\oplus \LL_\circ^{[j]}}
\end{align}
 where, just as in  the proof of Proposition \ref{Prop10}, we abuse notation by writing 
$\nabla_0$ for various restrictions of the trivial one;
\item
For each $j =0, \cdots, \N_\circ$,  the equality $\mcF_\circ^{[j]} = \bigoplus_{j' = 1}^j \mcO_S^{\oplus \LL_\circ^{[j']}}$ holds.
Also, the projection $\mff_\circ^{[j]}$ coincides with the projection $\mcF_\circ^{[j]}$  given by forgetting the last direct summand.
\end{itemize}

Then, it is verified that 
$\mcF^\nabla = \mcS ol (\nabla) = \bigoplus_{j=1}^{\N_\circ} \mcO_{U^{(\NN)}}^{\oplus \LL_\circ^{[j]}}$  and 
\begin{align} \label{EQ417}
\mcS ol (\nabla^{[j]}) =  \bigoplus_{j' =1}^j \mcO_{U^{(\NN)}} (-D_\circ^{(\NN)})^{\oplus \LL_\circ^{[j']}} \oplus \bigoplus_{j' = j+1}^{\N_\circ} \mcO_{U^{(\NN)}}
\end{align}
 ($j= 0, \cdots, \N_\circ$), where $D^{(\NN)}_\circ := \mr{Im}(\sigma_\circ^{(\NN)})$.
 Hence,  the equality $\mcF^{\nabla [j]}_\circ = \bigoplus_{j' =1}^j \mcO_S^{\oplus \LL_\circ^{[j']}}$ holds and $\mff_\circ^{\nabla [j]} : \mcF^{\nabla [j]}_\circ \twoheadrightarrow \mcF^{\nabla [j -1]}_\circ$ coincides  the projection  given by forgetting the last direct summand.
 Thus, $\msF^\nabla$   specifies  an object in $\mcB un_{\msU^{(\NN)}, a_0/p^\NN}$, and this  completes 
 the proof of assertion (i).

Next, we shall prove assertion (ii).
By considering the various definitions involved, 
one can 
construct the morphisms displayed in \eqref{EQ160}.
To prove that these morphisms are isomorphisms,
 we may assume that $S$ is the spectrum of a local ring.
Thus, 
it suffices to consider the case where $\msE$ and $\msF_\flat$ are of the forms \eqref{EQ414} and \eqref{EQ416}, respectively, and the assertion 
 can be proved immediately from 
the discussions in the proofs of Propositions \ref{Prop10} and \ref{Prop52}, (i). 
\end{proof}
%------------------------------------------------------------------------------------------

Since the formation of $\msF_\flat \mapsto \msF^\nabla$ is verified to be functorial, 
this operation yields a functor
$\mcD^{(\NN -1)}\text{-}\mcB un_{\msU, a_\circ}^{\psi = 0} \rightarrow \mcB un_{\msU^{(\NN)}, a_\circ/p^\NN}$.

%------------------------------------------------------------------------------------------
\bt\label{Thm52}
The assignments $\msE \mapsto \msE^F_\flat$ and $\msF_\flat \mapsto \msF^\nabla$ together determine an equivalence of categories
\begin{align} \label{EQ109}
\mcB un_{\msU^{(\NN)}, a_\circ /p^\NN} \xrightarrow{\sim}
\mcD^{(\NN -1)}\text{-}\mcB un_{\msU, a_\circ}^{\psi = 0}.
\end{align}
\et
%------------------------------------------------------------------------------------------
\begin{proof}
The assertion follows from Proposition
  \ref{Prop52}, (ii).
\end{proof}
%------------------------------------------------------------------------------------------

%%%%%%%%%%%%%%%%%%%%%%%%%%%%%%%%%%%%%%%%%
\subsection{Parabolic Frobenius pull-back in a global situation} \label{SS89}

As the final step of formulating the Frobenius pull-back,
let us consider the case where underlying space is the globally defined  curve $\msX$.
We may still assume that $S$ is connected.
 
 Let us choose an $r$-tuple of positive integers $(\N_1, \cdots, \N_r)$ and  an element $\vec{a} := (a_1, \cdots, a_r) \in \prod_{i=1}^r \Xi_{\N_i, \NN}^{\leq}$, where $a_i := (a_i^{[1]}, \cdots, a_i^{[\N_i]})$ ($i=1, \cdots, r$).
 We shall set
  \begin{align} \label{EQ420}
  a_i/p^\NN := (a_i^{[1]}/p^\NN, \cdots, a_i^{[\N_i]}/p^\NN), \hspace{8mm}
 \vec{a}/p^\NN := \left(a_1/p^\NN, \cdots, a_r/p^\NN \right).
 \end{align}
 Also, 
 let $\msE := (\mcE, \vec{\mfe}, \vec{a}/p^\NN)$ be a parabolic bundle on $\msX^{(\NN)}$
 belonging to  $\mcB un_{\msX^{(\NN)}, \vec{a}/p^\NN}$.
We apply the 
operation $(-)_\flat^F$ obtained  in \S\,\ref{SS31}
to 
the restriction of $\msE$ to
 the formal neighborhood of  $\sigma_i$ for every $i=1, \cdots, r$.
By gluing together  the resulting parabolic Frobenius pull-backs and the usual Frobenius pull-back over $X \setminus \bigcup_{i=1}^r \mr{Im}(\sigma_i)$, 
we obtain
 a parabolic $p^\NN$-flat bundle
\begin{align} \label{EQ167}
\msE_\flat^F := (\mcE^F, \nabla^F, \vec{\mfe}^F, \vec{a})
\end{align}
 on $\msX$, which belongs to $\mcD^{(\NN -1)}\text{-}\mcB un_{\msX, \vec{a}}^{\psi = 0}$.
If $(\LL_i^{[1]}, \cdots, \LL_i^{[\N_i]})$ denotes the type of $\vec{\mfe}$ at $\sigma_i$, then
 the second assertion in  Proposition \ref{Prop10} implies that
 $\mcS ol (\nabla^F) = \mcE$ and
 \begin{align} \label{EQ21}
 (\mr{det}(\mcE^F), \mr{det}(\nabla^F))
 \cong ((F^{(\NN)*}_{X/S}\mr{det}(\mcE))(D_\star), \nabla^\mr{can}_{\mr{det}(\mcE), D_\star}),
 \end{align}
 where $D_\star := \sum\limits_{i=1}^r \left(\sum\limits_{j=1}^{n_i}a_i^{[j]} \LL_i^{[j]}\right)\sigma_i$.
 This implies
  \begin{align} \label{EQ500}
 \mr{deg} (\mcE^F) = p^\NN \cdot \mr{par}\text{-}\mr{deg} (\msE),
 \hspace{5mm}
 \mu (\mcE^F) = p^\NN \cdot \mr{par}\text{-}\mu (\msE),
 \end{align}
 where $\mr{deg}(-)$ and $\mu (-)$ denote the usual (i.e., non-parabolic) degree and slope, respectively.

On the other hand, let us take a parabolic  $p^\NN$-flat bundle $\msF_\flat$ 
classified by $\mcD^{(\NN -1)}\text{-}\mcB un_{\msX, \vec{a}}^{\psi = 0}$.
Similarly to the above discussion, we can apply the operation  $(-)^\nabla$ obtained  in \S\,\ref{SS50} to 
obtain, from $\msF_\flat$, a parabolic bundle 
\begin{align}
\msF^\nabla := (\mcF^\nabla, \vec{\mff}^{\,\nabla}, \vec{a}/p^\NN)
\end{align}
on $\msX^{(\NN)}$ belonging to $\mcB un_{\msX^{(\NN)}, \vec{a}/p^\NN}$.

%---------------------------------------------------------------------------
\begin{definition} \label{Def113}
 We shall refer to $\msE^F_\flat$  as the {\bf parabolic Frobenius pull-back} of $\msE$ (by $F^{(\NN)}_{X/S}$).
 Also, we shall refer to $\msF^\nabla$ as the {\bf horizontal parabolic bundle} associated to $\msF_\flat$.
\end{definition} 
%---------------------------------------------------------------------------

The following theorem
 can be proved immediately by combining 
the higher-level generalization of Cartier's theorem by B. Le Stum and A. Quir\'{o}s  (cf. ~\cite[Theorem 5.1]{Kal}, ~\cite[Corollary 3.2.4]{LeQu}) and
   Theorem  \ref{Thm52} (or the second assertion of Proposition \ref{Prop52}, (ii)) applied to the formal neighborhoods of the marked points.

 %------------------------------------------------------------------------------------------
 \bt[cf. Theorem \ref{ThC}] \label{Prop11}
 Let $(\N_1, \cdots, \N_r)$ be  an $r$-tuple of positive integers  and $\vec{a} := (a_1, \cdots, a_r)$  an element of $\prod_{i=1}^r \Xi_{\N_i, \NN}^{\leq}$.
Then, the assignments $\msE \mapsto \msE^F_\flat$ and $\msF_\flat \mapsto \msF^\nabla$ together give an equivalence of categories
\begin{align} \label{EQ171}
\mcB un_{\msX^{(\NN -1)}, \vec{a}/p^\NN} \xrightarrow{\sim}
\mcD^{(\NN -1)}\text{-}\mcB un_{\msX, \vec{a}}^{\psi = 0}.
\end{align}
Moreover, the formation of this equivalence is functorial with respect to $S$.
 \et
 %------------------------------------------------------------------------------------------

In particular, we obtain the following assertion.

 %------------------------------------------------------------------------------------------
 \bt \label{Prop26}
 Let us keep the notation in the previous theorem.
 Also, let $\msE$ be a parabolic bundle classified by $\mcB un_{\msX^{(\NN)}, \vec{a}/p^\NN}$.
 Then, for a parabolic subbundle $\msG$ of $\msE$, 
its Frobenius pull-back $\msG^F_\flat$ forms a parabolic $p^\NN$-flat subbundle of $\msE^F_\flat$.
  Moreover,  
  the resulting assignment $\msG \mapsto \msG^F_\flat$
  gives a bijective correspondence between parabolic subbundles of $\msE$ and parabolic $p^\NN$-flat subbundles of $\msE^F_\flat$.
\et
  %------------------------------------------------------------------------------------------
\begin{proof}
The assertions follow from the various definitions involved.
\end{proof}
 %------------------------------------------------------------------------------------------

%%%%%%%%%%%%%%%%%%%%%%%%%%%%%%%%%%%%%%%%%
\subsection{Transitivity of parabolic Frobenius pull-backs} \label{SS94}

Let $M$ be an integer with $0 \leq M \leq \NN$.
In the non-parabolic case, it is immediate that the operation of takings  Frobenius pull-backs is transitive, which means  $F^{(M)*}_{X/S} F^{(\NN -M)*}_{X^{(M)}/S} (-) \cong F^{(\NN)*}_{X/S}(-)$.
This section discusses how can we generalize this transitivity to parabolic Frobenius pull-backs.

For each integer  $a$ in $\{ 0, \cdots, p^\NN -1 \}$,
let $(s_1^M (a), s_2^M (a))$ be  the  unique pair of integers satisfying 
that $a = s_1^M (a) + p^{M} \cdot s_2^M (a)$ and $0 \leq s_1^M (a) <p^M$, $0 \leq s_2^M (a) < p^{\NN -M}$.
We choose
 an $r$-tuple of positive integers $(\N_1, \cdots, \N_r)$ and 
  an element $\vec{a} := (a_1, \cdots, a_r)$, where $a_i = (a_i^{[1]}, \cdots, a_i^{[\N_i]})$,  of $\prod_{i=1}^r \Xi_{\N_i, \NN}^{\leq}$ satisfying
    \begin{align} \label{EQ503}
    s_1^M (a_i^{[1]}) \leq  \cdots \leq s_1^M (a_i^{[\N_i]}) 
    \hspace{5mm} \text{and} \hspace{5mm}  
   s_2^M (a_i^{[1]}) \leq  \cdots \leq  s_2^M (a_i^{[\N_i]}) 
   \end{align}
 for every $i=1, \cdots, r$.
 By putting $s_t^M (a_i) := (s_t^M (a_i^{[1]}), \cdots, s_t^M (a_i^{[\N_i]}))$ ($t \in \{1,2\}$, $i \in \{1, \cdots, r\}$), we obtain a collection of  parabolic weights 
 $s_t^M (\vec{a}) := (s_t^M (a_1), \cdots, s_t^M (a_r))$.

Now, let 
$\msE := (\mcE, \vec{\mfe}, \vec{a}/p^\NN)$ be  a parabolic bundle classified by $\mcB un_{\msX^{(\NN)}, \vec{a}/p^\NN}$; it associates the
parabolic Frobenius pull-back 
 $\mcE^F := (\mcE^F, \nabla^F, \vec{\mfe}^F, \vec{a})$ by 
 $F_{X/S}^{(\NN)}$.
 Also,
 since $s_2^M (\vec{a})$ belongs to $\prod_{i=1}^r \Xi_{\N_i, \NN -M}^{\leq}$, 
  one obtains the parabolic Frobenius pull-back of $(\mcE, \vec{\mff}, s_2^M (\vec{a})/p^{\NN -M})$ by $F_{X^{(M)}/S}^{(\NN - M)} : X^{(M)} \rightarrow X^{(\NN)}$,
 which we  denote by $\mcE' := (\mcE', \nabla', \vec{\mfe}\,', s_2^M (\vec{a}))$.
The resulting triple $(\mcE', \vec{\mfe}\,', s_1^M (\vec{a})/p^M)$ specifies an object in the category  $\mcB un_{\msX^{(M)}, s_1^M (\vec{a})/p^M}$, so it associates the parabolic Frobenius pull-back by $F_{X/S}^{(M)}$; we denote it by
$\msE'' := (\mcE'', \nabla'', \vec{\mfe}\,'', s_1^M (\vec{a}))$.

It follows from ~\cite[Corollaire 3.3.1]{Mon} that
the Frobenius pull-back of $\nabla'$ (in the usual sense) gives a $\mcD^{(\NN -1)}$-module structure on $F^{(M)*}_{X/S}\mcE'$, and the definition of parabolic Frobenius pull-back implies that this extends to a $\mcD^{(\NN -1)}$-module structure $\nabla^{F''}$ on $\mcE''$.
By construction, the $\mcD^{(M-1)}$-module structure on $\mcE''$ obtained from $\nabla^{F''}$ by reducing the level coincides with $\nabla''$.
Moreover,  the following proposition can be proved by the various definitions involved.

%------------------------------------------------------------------------------------------
\bpr \label{Prop100}
The natural isomorphism $F^{(M)*}_{X/S} F_{X/S}^{(\NN -M)*} \mcE \xrightarrow{\sim} F^{(\NN)*}_{X/S}\mcE$ extends to an isomorphism of  parabolic $p^\NN$-flat bundles
\begin{align} \label{EQ504}
(\mcE'', \nabla^{F''}, \vec{\mfe}\,'', \vec{a}) \xrightarrow{\sim} (\mcE^F, \nabla^F, \vec{\mfe}, \vec{a}).
\end{align}
\epr
%------------------------------------------------------------------------------------------

%%%%%%%%%%%%%%%%%%%%%%%%%%%%%%%%%%%%%%%%%
\subsection{Parabolic  Higgs bundles}
 \label{SS71}

The 
celebrated
 Ogus-Vologodsky correspondence (cf. ~\cite{OgVo}) generalizes  Cartier's theorem in such a  way that 
 involves   nilpotent Higgs bundles, and there exist some variants including  higher-level $\mcD$-modules (cf. ~\cite{GLQ},  ~\cite{Ohk}, ~\cite{Sch}).
In the rest of this section, we will give another slight extension, i.e., 
  the Ogus-Vologodsky correspondence for parabolic Higgs bundles, extending the equivalence  asserted in Theorem \ref{Prop11}.

Suppose here that we are given a lifting $\widetilde{S}$ of $S$ modulo $p^2$ and a lifting $\widetilde{\msX}^{(\NN)} := (\widetilde{X}^{(\NN)}, \{ \widetilde{\sigma}_i \}_{i=1}^r)$ of  $\msX^{(\NN)}$ over $\widetilde{S}$.
We write $\msX_\emptyset$ for the unpointed smooth curve $(X, \emptyset)$.
Recall from ~\cite{Ohk} that the lifting $\widetilde{X}/\widetilde{S}$ associates 
a canonical equivalence of categories between the following categories:
\begin{itemize}
\item[(a)]
the category of $\mcO_{X^{(\NN)}}$-modules $\mcE$ equipped with a Higgs field $\theta : \mcE \rightarrow \Omega_{X^{(\NN)}/S} \otimes \mcE$ with $\theta^{p -1} = 0$;
\item[(b)]
the category of $\mcD_{X/S}^{(\NN -1)}$-modules $(\mcF, \nabla)$ whose $p^\NN$-curvature $\psi_{(\mcF, \nabla)}$ satisfies $\psi_{(\mcF, \nabla)}^{p-1} = 0$.
\end{itemize}
Note that this equivalence can be formulated in the case where $\msX$ is replaced with the local curve $\msU$ defined  as in \eqref{EQw34} (after fixing a lifting of $\msU^{(\NN)}$ modulo $p^2$).
For each object $(\mcE, \theta)$ (resp., $(\mcF, \nabla)$) in the category  (a) (resp., (b)), the counterpart  via the above equivalence will be denoted by 
 $C^{-1}_{\widetilde{\msX}^{(\NN)}}(\mcE, \theta)$ (resp., $C_{\widetilde{\msX}^{(\NN)}} (\mcF, \nabla)$).

%----------------------------------------------------------------------------
\bde \label{Def4811}
\begin{itemize}
\item[(i)]
A {\bf non-logarithmic parabolic Higgs bundle} on $\msX$ is a collection of data 
\begin{align} \label{EQ330}
\msF_\sharp := (\mcF, \vec{\mff}, \vec{\alpha}, \theta)
\end{align}
consisting of  a parabolic bundle $(\mcF, \vec{\mff}, \vec{\alpha})$   on $\msX$  (as in the resp'd portion of \eqref{Eq201}) and 
a Higgs field
 $\theta : \mcF \rightarrow \Omega_{X/S} \otimes \mcF$
   such that $\mr{Ker}(\varpi^{[j]}_{\msF, i})$ (where $\msF := (\mcF, \vec{\mff})$) is closed under $\theta$ for every $i=1, \cdots, r$ and $j  =  1, \cdots, \M_i$.

Also, one can define the notion of a morphism between two non-logarithmic parabolic Higgs bundles.
\item[(ii)]
We say that a non-logarithmic parabolic Higgs bundle $\msF_\sharp := (\mcF, \vec{\mff}, \vec{\alpha}, \theta)$ is {\bf $p^\NN$-nilpotent} if $\theta^{p-1} = 0$.
\end{itemize}
\ede
%---------------------------------------------------------------------------

Let $(\M_1, \cdots \M_r)$ be an $r$-tuple of positive integers and $\vec{a} := (a_1, \cdots, a_r)$ an element of $\prod_{i=1}^r \Xi_{\M_i, \NN}^{\leq}$.
Denote by
\begin{align} \label{EQR21}
\mcH ig_{\msX, \vec{a}/p^\NN}^{\mr{non}\text{-}\mr{log}}
\end{align}
the category of  $p^\NN$-nilpotent non-logarithmic parabolic Higgs bundles on $\msX$ of parabolic weights $\vec{a}/p^\NN$.
Since each
vector bundle may be identified with a Higgs bundle  with vanishing Higgs field,
$\mcH ig_{\msX, \vec{a}/p^\NN}^{\mr{non}\text{-}\mr{log}}$ contains $\mcB un_{\msX, \vec{a}/p^\NN}$ as a full subcategory.
The notion
introduced in Definition \ref{Def4811}
can be formulated when the underlying curve is replaced with  $\msU$.
In particular, 
we have the category $\mcH ig_{\msU^{(\NN)}, a_\circ/p^\NN}^{\mr{non}\text{-}\mr{log}}$ for parabolic weights $a_\circ/p^\NN$.

In what follows,
we consider the  local construction of $p^\NN$-nilpotent bundles using 
each object in $\mcH ig_{\msU^{(\NN)}, a_\circ/p^\NN}^{\mr{non}\text{-}\mr{log}}$ 
  in the way similar to the argument in \S\,\ref{SS31}.
  Let us choose   a lifting $\widetilde{\msU}^{(\NN)} := (\widetilde{U}^{(\NN)}/\widetilde{S}, \widetilde{\sigma}_\circ)$ of $\msU^{(\NN)}$ modulo $p^2$, and 
 take an object
 $\msE_\sharp := (\mcE, \mfe_\circ, a_\circ/p^\NN, \theta)$ of 
 $\mcH ig_{\msU^{(\NN)}, a_\circ/p^\NN}^{\mr{non}\text{-}\mr{log}}$,
where $\mfe_\circ$ and $a_\circ /p^\NN$ are given by \eqref{EQ44} and \eqref{EQR3}, respectively. 
We write $\mcG^{[j]} := \sigma_{\circ *}\mcE_\circ^{[j]}$ ($j=0, \cdots, \M_\circ$) and
$\msE := (\mcE, \mfe_\circ, a_\circ/p^\NN)$.
Note that $\theta$ preserves $\mr{Ker}(\varpi_\msE^{[j]}) \left( \subseteq \mcE\right)$
for each  $j$,  so 
it induces an $\mcO_U$-linear morphism $\overline{\theta}^{[j]} : \mcG^{[j]}\rightarrow \Omega_{U/S} \otimes \mcG^{[j]}$.
By applying the functor $C_{\widetilde{\msU}^{(\NN)}}^{-1} (-)$ to $(\mcE, \theta)$ (resp., $( \mcG^{[j]}, \overline{\theta}^{[j]})$),
we obtain a vector bundle $\mcH$ on $U$ (resp., an $\mcO_U$-module  $\mcH^{[j]}$) equipped  with a structure of    $\mcD^{(\NN -1)}_{\circ}$-module  $\nabla_\mcH$ (resp., $\nabla_{\mcH}^{[j]}$).
Since $\widetilde{\sigma}_\circ$ defines a marked point on $\widetilde{U}^{(\NN)}$, 
 the construction of the equivalence
between the categories  (a) and  (b) implies that, 
 for every $i=1, \cdots, r$,
$\mcH^{[j]}$ is supported on $\mr{Im}(\sigma_\circ)$ and obtained as the direct image of a vector bundle on $S$ via $\sigma_\circ$.
The surjection $\varpi_{\msE}^{[j]} : \mcE \twoheadrightarrow \mcG^{[j]}$ yields a surjection 
$\check{\varpi}_{\msE}^{[j]} : (\mcH, \nabla_\mcH) \twoheadrightarrow (\mcH^{[j]}, \nabla_{\mcH}^{[j]})$.
Consider  the following composite surjection of $\mcD^{(\NN -1)}_\circ$-modules:
\begin{align}
\widetilde{\varpi}^{[j]}_{\msE} & :  (\mcH (p^\NN \cdot D_\circ), \nabla_{\mcH, p^\NN \cdot D_\circ}) \\
& \ \  \twoheadrightarrow (\mcH_i^{[j]} (p^\NN \cdot D_\circ), \nabla_{\mcH,  p^\NN \cdot D_\circ}^{[j]}) \notag \\
& \ \  \twoheadrightarrow
(\mcH_i^{[j]} (p^\NN \cdot D_\circ), \nabla_{\mcH, p^\NN \cdot D_\circ}^{[j]}) \otimes  (\mcO_X/ \mcO_X ((a_\circ^{[j]}-p^\NN) \overline{\sigma}_i), \overline{\nabla}_0)
\end{align}
where
\begin{itemize}
\item
$\nabla_{\mcH, p^\NN \cdot D_\circ}$ (resp., $\nabla^{[j]}_{\mcH, p^\NN \cdot D_\circ}$)  denotes the $\mcD^{(\NN -1)}_\circ$-module structure on $\mcH (p^\NN \cdot D_\circ)$ (resp., $\mcH^{[j]} (p^\NN \cdot D_\circ)$)  extending $\nabla_\mcH$ (resp., $\nabla^{[j]}_{\mcH}$);
\item
the first arrow arrow is obtained by extending $\check{\varpi}_{\msE}^{[j]}$, and 
the second  arrow arises from the natural quotient $(\mcO_U,  \nabla_0) \twoheadrightarrow (\mcO_U/ \mcO_U ((a_\circ^{[j]}-p^\NN) \overline{\sigma}_\circ), \overline{\nabla}_0)$.
\end{itemize}
The kernel of this composite determines a $\mcD^{(\NN -1)}_\circ$-module
$(\mr{Ker}(\widetilde{\varpi}_{\msE}^{[j]}), \nabla_{\mr{Ker}}^{[j]})$.
Applying this construction to various  $j$'s and then taking their intersection, we obtain a $\mcD_\circ^{(\NN -1)}$-module
\begin{align}
\msE^F = (\mcE^F, \nabla^F) := \bigcap_{j=1}^{\M_\circ} (\mr{Ker}(\widetilde{\varpi}_{\msE}^{[j]}), \nabla_{\mr{Ker}}^{[j]}).
\end{align}
Moreover, 
just as in \eqref{EQ101}  and \eqref{EQ100}, we obtain a quasi-parabolic structure $\mfe^F_\circ$ on $\mcE^F$.
The resulting collection 
\begin{align}
\msE^F_\flat := (\mcE^F, \nabla^F, \mfe_\circ, a_\circ)
\end{align}
forms a parabolic $p^\NN$-nilpotent bundle on $\msU$.
(This construction for vanishing Higgs fields coincides with that in \eqref{EQ167}.)

%-------------------------------------------------------------
\bde \label{Def556}
\begin{itemize}
\item[(i)]
Let $\msF_\flat$ be a parabolic $p^\NN$-nilpotent bundle on $\msU$ of parabolic weights $a_\circ$.
We say that $\msF_\flat$  is {\bf of restricted type}
if it is isomorphic to $\msE^F_\flat$ for some object $\msE$ in $\mcH ig_{\msU^{(\NN)}, a_\circ /p^\NN}^{\mr{non}\text{-}\mr{log}}$.
\item[(ii)]
Let $\msF_\flat$
  be a parabolic $p^\NN$-nilpotent bundle on $\msX$.
We say that $\msF_\flat$ is {\bf of restricted type}
if, for each $i=1, \cdots, r$, its restriction to the formal completion  $\msU'$  of $X$ along $\mr{Im}(\sigma_i)$   is, locally on $S$,  of restricted type under some (and hence, any)  identification $\msU' \cong \msU$.
\end{itemize}
\ede
%-------------------------------------------------------------

Denote by 
\begin{align} \label{EQR23}
\mcD^{(\NN -1)}\text{-}\mcB un_{\msX, \vec{a}}^{\mr{res}} \ \left(\text{resp.,} \  \mcD^{(\NN -1)}\text{-}\mcB un_{\msU, a_\circ}^{\mr{res}}\right)
\end{align}
the full subcategory of  $\mcD^{(\NN -1)}\text{-}\mcB un_{\msX, \vec{a}}^{\mr{nilp}}$ (resp., $\mcD^{(\NN -1)}\text{-}\mcB un_{\msU, a_\circ}^{\mr{nilp}}$) consisting of 
 parabolic $p^\NN$-nilpotent bundles of restricted type;
 it contains $\mcD^{(\NN -1)}\text{-}\mcB un_{\msX, \vec{a}}^{\psi = 0}$ (resp., $\mcD^{(\NN -1)}\text{-}\mcB un_{\msU, a_\circ}^{\psi = 0}$) as a full subcategory.

\LSP
%%%%%%%%%%%%%%%%%%%%%%%%%%%%%%%%%%%%%%%%%
\subsection{Parabolic generalization of O-V correspondence} \label{SS74}

Let $\msE_\sharp$ and $\msE^F_\flat$ be as in the previous subsection.
Denote by $\mcF$ (resp., $\mcF^{[j]}$)
 the maximal  element (with respect to the inclusion relation) among all the $\mcD_{\circ}^{(\NN -1)}$-submodules $\mcF'$ of $\mcE^F$ (resp., $\mr{Ker}(\varpi^{[j]}_{\msE^F})(-(a_\circ^{[j]}+1)D_\circ)$) 
 whose 
 $\mcD_{\circ}^{(\NN -1)}$-actions  
 extend to $\mcD_{U/S}^{(\NN -1)}$-actions.
By definition, $\mcF$ (resp., $\mcF^{[j]}$) is equipped with 
a $\mcD_{U/S}^{(\NN -1)}$-module structure
 $\nabla_{\mcF}$ (resp., $\nabla_{\mcF^{[j]}}$). 
 By applying the functor  $C_{\widetilde{\msU}^{(\NN)}} (-)$ to $(\mcF, \nabla_{\mcF})$ (resp., $(\mcF^{[j]}, \nabla_{\mcF^{[j]}})$),
 we obtain 
 a Higgs bundle $(\mcE_\star, \theta_\star)$ (resp., $(\mcE_\star^{[j]}, \theta_\star^{[j]})$).
 The $\mcO_S$-module $\mcE_{\star, \circ}^{[j]} := \sigma_\circ^{(\NN)*}(\mcE_\star / \mcE_\star^{[j]})$ fit into a sequence of surjections
 \begin{align}
 \mfe_{\star, \circ} : \sigma_\circ^{(\NN)*} \mcE_\star = \mcE_{\star, \circ}^{[\M_\circ]} \xrightarrow{\mfe_{\star, \circ}^{[\M_\circ]}}
 \mcE_{\star, \circ}^{[\M_\circ -1]}
 \xrightarrow{\mfe_{\star, \circ}^{[\M_\circ -1]}}
 \cdots
 \xrightarrow{\mfe_{\star, \circ}^{[2]}}
 \mcE_{\star, \circ}^{[1]}
 \xrightarrow{\mfe_{\star, \circ}^{[0]}} \mcE_{\star, \circ}^{[0]} = 0.
 \end{align}
 Thus, we have obtained a collection 
  \begin{align}
 \msE_{\star, \sharp} := (\mcE_\star, \theta_\star, \mfe_{\star, \circ}, a_\circ /p^\NN).
 \end{align}

 %-----------------------------------------------------------------
 \ble\label{Def5029}
 $\msE_{\star, \sharp}$ forms a non-logarithmic $p^\NN$-nilpotent   Higgs bundle, and 
 there exists an isomorphism 
$\msE_\sharp \xrightarrow{\sim} \msE_{\star, \sharp}$ functorial with respect to $\msE_\sharp$.
In particular, $\msE_\sharp \mapsto \msE^F_\flat$ defines an equivalence of categories
\begin{align} \label{EQR10}
\mcH ig_{\msU^{(\NN)}, a_\circ /p^\NN}^{\mr{non}\text{-}\mr{log}} \xrightarrow{\sim} \mcD^{(\NN -1)}\text{-}\mcB un_{\msU, a_\circ}^{\mr{res}}.
\end{align}
 \ele
%-----------------------------------------------------------------
\begin{proof}
It follows from the definition of ``$\mcF$" that there  exists  an inclusion $(\mcH, \nabla_\mcH) \hookrightarrow (\mcF, \nabla_\mcF)$ (where ``$(\mcH, \nabla_\mcH)$" is the $\mcD_\circ^{(\NN -1)}$-module  introduced in the previous subsection), and it  is verified to be an isomorphism.
Thus, this yields, via $C_{\widetilde{\msU}^{(\NN)}}$, an isomorphism $(\mcE, \theta)  \xrightarrow{\sim}   (\mcE_\star, \theta_\star)$, which specifies  an isomorphism $\msE_\sharp \xrightarrow{\sim} \msE_{\star, \sharp}$.
\end{proof}
%-----------------------------------------------------------------

Now, let us go back to the global situation.
One can glue together the equivalence between the categories (a) and (b) (mentioned above)
 with $X$ replaced by $X \setminus \bigcup_{i=1}^r \mr{Im}(\sigma_i)$ and
the equivalence \eqref{EQR10} in the case where $\msU$ is taken to be the   formal  completion
along every marked point. 
As a result, we obtain  the following assertion.

%------------------------------------------------------------------------------------
\bt\label{TT495}
There exists a canonical equivalence of categories
\begin{align} \label{EQ3335}
\mcH ig_{\msX^{(\NN)}, \vec{a}/p^\NN}^\mr{non\text{-}log} \xrightarrow{\sim}\mcD^{(\NN -1)}\text{-}\mcB un_{\msX, \vec{a}}^{\mr{res}}
\end{align}
such that the following square diagram  is $1$-commutative:
\begin{align} \label{Eqwe4}
\vcenter{\xymatrix@C=46pt@R=36pt{
\mcB un_{\msX^{(\NN)}, \vec{a}/p^\NN} \ar[r]^-{\eqref{EQ171}}_-{\sim}  \ar[d]_-{\mr{inclusion}}& 
\mcD^{(\NN -1)}\text{-} \mcB un_{\msX, \vec{a}}^{\psi = 0}
 \ar[d]^-{\mr{inclusion}}
\\
\mcH ig_{\msX^{(\NN)}, \vec{a}/p^\NN}^{\mr{non}\text{-}\mr{log}}\ar[r]^-{\sim}_-{\eqref{EQ3335}} & \mcD^{(\NN -1)}\text{-} \mcB un_{\msX, \vec{a}}^{\mr{res}}.
 }}
\end{align}
Moreover, the formation of this equivalence is functorial with respect to $S$.
\et
%------------------------------------------------------------------------------------

 %%%%%%%%%%%%%%%%%%%%%%%%%%%%%%%%%%%%%%%%%
%%%%%%%%%%%%%%%%%---[begin section]---%%%%%%%%%%%%%%
\section{Dormant $\mr{GL}_n^{(\NN)}$-opers} \label{S23}

In this section, we
recall from  ~\cite{Wak4} the definition of a dormant $\mr{GL}_n^{(\NN)}$-oper.
Roughly speaking, a dormant $\mr{GL}_n^{(\NN)}$-oper is a $p^\NN$-flat bundle equipped with a complete flag satisfying a strict form of Griffiths transversality.
We will prove in the next section that these objects defined on $\msX$ correspond bijectively to certain parabolic bundles on $\msX^{(\NN)}$ destabilized by  parabolic Frobenius pull-back.

In the rest of the present paper, we assume that the pair of nonnegative integers $(g, r)$ satisfies  $2g-2 +r >0$.

%%%%%%%%%%%%%%%%%%%%%%%%%%%%%%%%%%%%%%%%%
\subsection{Definition of a dormant $\mr{GL}_n^{(\NN)}$-oper} \label{SS311}

Let $n$ be a positive  integer. 
Consider   a collection of  data 
 \begin{align} 
 \label{GL1}
 \msF^\heartsuit := (\mcF, \nabla, \{ \mcF^j \}_{j=0}^n),
 \end{align}
where
\begin{itemize}
\item
$\mcF$ is a vector bundle  on $X$ of rank $n$;
\item
$\nabla$ is a $\mcD^{(\NN -1)}$-module structure on $\mcF$;
\item
$\{ \mcF^j \}_{j=0}^n$ is   an $n$-step decreasing filtration
\begin{equation}
0 = \mcF^n \subseteq \mcF^{n-1} \subseteq \dotsm \subseteq  \mcF^0= \mcF
\end{equation}
 on $\mcF$ consisting of subbundles such that the subquotients  $\mcF^j / \mcF^{j+1}$ are line bundles.
\end{itemize}

%----------------------------------------------------[begin definition]------------------
\bde \label{y0108} 
\begin{itemize}
 \item[(i)]
We shall say that $\msF^\heartsuit$ is  a {\bf $\mr{GL}_n^{(\NN)}$-oper}   (or a {\bf $\mr{GL}_n$-oper of level $\NN$})
 on  $\msX$ if, for every $j=0, \cdots, n-1$, 
 the $\mcO_X$-linear morphism
  $\mcD^{(\NN -1)} \otimes \mcF \rightarrow \mcF$ induced by $\nabla$ restricts to an isomorphism
  \begin{align} \label{Eq59}
 \mcD_{< n-j}^{(\NN -1)} \otimes \mcF^{n-1} \xrightarrow{\sim} \mcF^j.
 \end{align}
 The notion of an isomorphism between $\mr{GL}_n^{(\NN)}$-opers can be defined in a natural manner.
\item[(ii)]
We shall say that a $\mr{GL}_n^{(\NN)}$-oper $\msF^\heartsuit := (\mcF, \nabla, \{ \mcF^j \}_j)$ is {\bf dormant} if $\nabla$ has vanishing $p^\NN$-curvature.
\end{itemize}  
  \ede
%------------------------------------------[end definition]-------------------

Let  
$\vec{a} := (a_1, \cdots, a_r)$ (where $a_i := (a_i^{[1]}, \cdots, a_i^{[n]})$) be an element of $(\Xi_{n, \NN}^{\leq})^{\times r}$, and let
$\mcL_\flat := (\mcL, \nabla_\mcL)$ be a $p^\NN$-flat line bundle on $\msX$, i.e., a pair consisting of a line bundle $\mcL$ on $X$ and a $\mcD^{(\NN -1)}$-module structure $\nabla_\mcL$ on $\mcL$ with vanishing $p^\NN$-curvature.
For each $S$-scheme $s : S' \rightarrow S$,  we shall write
\begin{align}
\mcO p^{^\mr{Zzz...}}_{\msX, n, \vec{a}, \mcL_\flat} (S')
\end{align}
for the groupoid classifying collections
$(\msF^\heartsuit, \eta)$,
where
\begin{itemize}
\item
 $\msF^\heartsuit := (\mcF, \nabla, \{ \mcF^j \}_{j=0}^n)$ is a dormant $\mr{GL}^{(\NN)}_n$-oper on $\msX_{S'}$ whose exponent at $\sigma_i$ coincides with the multiset $[a^{[1]}_i, \cdots, a^{[n]}_i ]$ for every $i=1, \cdots, r$; 
 \item 
$\eta$ denotes an isomorphism of $\mcD^{(\NN -1)}_{X^\mr{log}_{S'}/S'}$-modules  $\mr{det}(\mcF, \nabla) \xrightarrow{\sim} s^*\mcL_\flat$, where $s^*\mcL_\flat$ denotes the base-change of $\mcL_\flat$ along $s$.
\end{itemize}

Then, we obtain the category fibered in groupoids
\begin{align} \label{Eq405}
\mcO p^{^\mr{Zzz...}}_{\msX, n, \vec{a}, \mcL_\flat}
\end{align}
 over $\mcS ch_{/S}$ whose fiber over  each  $S$-scheme $S'$ is  
$\mcO p^{^\mr{Zzz...}}_{\msX, n, \vec{a}, \mcL_\flat} (S')$.
It is immediately verified that $\mcO p^{^\mr{Zzz...}}_{\msX, n, \vec{a}, \mcL_\flat}$ forms  a (possibly empty)  stack over $S$.
Since the determinants of $\mr{GL}_n^{(\NN)}$-opers in $\mcO p^{^\mr{Zzz...}}_{\msX, n, \vec{a}, \mcL_\flat}$  are isomorphic to $\mcL_\flat$,   this stack becomes empty unless the exponent  of $\mcL_\flat$ at $\sigma_i$  coincides with the sum $\sum\limits_{j=1}^{n} a_i^{[j]}$ modulo $p^\NN$ (for every $i =1, \cdots, r$).
Also,  it follows from ~\cite[Proposition 6.3.5]{Wak4} that this stack becomes empty unless $a_i$ lies in $\Xi_{n, \NN}^{<} \left(\subseteq \Xi_{n, \NN}^{\leq} \right)$ for every $i$.

%-------------------------------------------------------------------------------------
\begin{rem}\label{EQ201}
Suppose that we are given an object $(\msF^\heartsuit, \eta)$  of $\mcO p^{^\mr{Zzz...}}_{\msX, n, \vec{a}, \mcL_\flat}$, where $\msF^\heartsuit := (\mcF, \nabla, \{ \mcF^j \}_{j=0}^n)$.
 Since \eqref{Eq59} is an isomorphism, there exists  a composite
  isomorphism  
  \begin{align} \label{EQ491}
    \mcF^j/\mcF^{j+1}
   &   \xrightarrow{\sim} (\mcD_{< n-j}^{(\NN -1)} \otimes \mcF^{n-1})/
   (\mcD_{< n-j-1}^{(\NN -1)} \otimes \mcF^{n-1})
    \\
   & 
 \xrightarrow{\sim} (\mcD_{< n-j}^{(\NN -1)}/\mcD_{< n-j-1}^{(\NN -1)}) \otimes \mcF^{n-1}
  \notag \\
   & \xrightarrow{\sim}
   \mcT^{\otimes (n-j-1)} \otimes \mcF^{n-1}
   \notag 
  \end{align}
  for every $j=0, \cdots, n-1$.
 Hence,  we obtain a composite of canonical isomorphisms
 \begin{align} \label{EQ283}
 \mr{det}(\mcF) \xrightarrow{\sim} \bigotimes_{j=0}^{n-1}\mcF^j /\mcF^{j+1}
 \xrightarrow{\sim}  \bigotimes_{j=0}^{n-1} \mcT^{\otimes (n-j-1)} \otimes \mcF^{n-1}
 \xrightarrow{\sim}   \mcT^{\otimes \frac{n (n-1)}{2}} \otimes (\mcF^{n-1})^{\otimes n}.
 \end{align}
In particular, the relative degree  $\mr{deg}(\mcL)$ of $\mcL$ can be calculated as follows:
 \begin{align} \label{EQ456}
 \mr{deg} (\mcL) 
  &= \mr{deg}( \mcT^{\otimes \frac{n (n-1)}{2}} \otimes (\mcF^{n-1})^{\otimes n})  \\
 & =- \frac{n (n-1) (2g-2 +r)}{2} + n \cdot \mr{deg}(\mcF^{n-1}). \notag
 \end{align}
 It follows that $\mcO p^{^\mr{Zzz...}}_{\msX, n, \vec{a}, \mcL_\flat}$ becomes empty  when 
 $n$ does not divide  the integer $\mr{deg}(\mcL) +\frac{n (n-1) (2g-2 +r)}{2}$.
\end{rem}
%-------------------------------------------------------------------------------------

%%%%%%%%%%%%%%%%%%%%%%%%%%%%%%%%%%%
\subsection{Relation with dormant $\mr{PGL}_n^{(\NN)}$-opers} \label{SS45}

This subsection describes the relationship between   $\mcO p^{^\mr{Zzz...}}_{\msX, n , \vec{a}, \mcL_\flat}$ and the moduli stack of dormant $\mr{PGL}_n^{(\NN)}$-opers.
The precise definition of  a dormant $\mr{PGL}_n^{(\NN)}$-oper (of prescribed radii) is omitted here, so we refer the reader to ~\cite[\S\,5]{Wak4} for details.
Note that we will only use the ``$\NN =1$" case in the subsequent discussion (cf. Theorem \ref{Th22}, (i)),  and that case is comprehensively investigated in ~\cite{Wak2}.

Hereinafter, we assume that $n < p$.
We shall denote by 
\begin{align} \label{EQ199}
\overline{\Xi}_{n, \NN}^{<}
\end{align}
the set of equivalence classes of elements in $\Xi_{n, \NN}^{<}$, where the equivalence relation  is defined as follows: $(a^{[1]}, \cdots, a^{[n]})$ and $(b^{[1]}, \cdots, b^{[n]})$ are {\it equivalent} if there exists an element $c \in \mbZ$ such  that the two subsets of $\mbZ/p^\NN \mbZ$ defined, via the quotient $\mbZ \twoheadrightarrow \mbZ/p^\NN \mbZ$, by the sets $\{ a^{[j]} + c \}_{j=1}^n$ and $\{ b^{[j]} \}_{j=1}^n$ coincide.
(Note that $\overline{\Xi}_{n, \NN}^{<}$ may be regarded as  a subset of ``$\Xi_{n, N}$"  introduced in ~\cite[\S\,6.3]{Wak4}.)
This set has  a natural projection
\begin{align} \label{EQ200}
\rho : \Xi_{n, \NN}^{<} \twoheadrightarrow \overline{\Xi}_{n, \NN}^{<}.
\end{align}
The assumption $n < p$ implies  that, {\it for each $i \in \{1, \cdots , r \}$ and  $d := (d^{[1]}, \cdots, d^{[n]}) \in \overline{\Xi}_{n, \NN}^{<}$, there exists a unique  $(b^{[1]}, \cdots, b^{[n]}) \in \rho^{-1} (d)$ such that $\sum\limits_{j=1}^{n} b^{[j]}$ coincides with the exponent of $\mcL_\flat$ at $\sigma_i$.}

Let $\vec{a} := (a_1, \cdots, a_r)$ be as before, and
write $\rho (\vec{a}) := (\rho (a_1), \cdots, \rho (a_r))$.
According to ~\cite[\S\,6.3]{Wak4},
one can obtain the moduli stack
\begin{align} \label{Eq465}
\overline{\mcO} p^{^\mr{Zzz...}}_{\msX, n, \rho (\vec{a})}
\end{align}
classifying dormant $\mr{PGL}_n^{(\NN)}$-opers on $\msX$ of radii $\rho (\vec{a})$.
 (That is to say, $\overline{\mcO} p^{^\mr{Zzz...}}_{\msX, n, \rho (\vec{a})}$ is  the fiber product of the projection  ``$\Pi_{\rho (\vec{a}), g, r} : \mcO p^{^\mr{Zzz...}}_{\rho (\vec{a}), g, r} \rightarrow \overline{\mcM}_{g, r}$" introduced in ~\cite[\S\,6.3]{Wak4} and the classifying morhism $S \rightarrow  \mcM_{g, r} \subseteq \overline{\mcM}_{g, r}$ of $\msX$.
 Here,  $\mcM_{g, r}$  denotes the moduli stack of $r$-pointed smooth proper curves of genus $g$ over $k$,  and  $\overline{\mcM}_{g, r}$ denotes its Deligne-Mumford compactification).
Each dormant $\mr{GL}_n^{(\NN)}$-oper induces
a dormant $\mr{PGL}_n^{(\NN)}$-oper via projectivization, and 
this assignment yields a morphism of $S$-stacks
\begin{align} \label{EQ195}
\mcO p^{^\mr{Zzz...}}_{\msX, n, \vec{a}, \mcL_\flat} \rightarrow \overline{\mcO} p^{^\mr{Zzz...}}_{\msX, n, \rho (\vec{a})}.
\end{align}

Next, 
for each $d \in \mbQ$,
  denote by $\mcP ic^d_{X/S}$ the relative Picard stack  of $X/S$  classifying  
line bundles on $X$ of relative degree $d$, where
$\mcP ic^d_{X/S} := \emptyset$ if $d \in \mbQ \setminus \mbZ$.
We set  $q := \mr{deg}(\mcL) + \frac{n (n-1)(2g-2+r)}{2}$.
Then, the assignment $\mcN \mapsto \mcN^{\otimes n}$ determines a morphism of $S$-stacks
\begin{align} \label{EQQ299}
[n] :  \mcP ic_{X/S}^{q/n} \rightarrow \mcP ic_{X/S}^{q}.
\end{align}
The fiber of this morphism over the $S$-rational point  classifying  $\mcL \otimes \Omega^{\otimes \frac{n (n-1)}{2}}$ defines a (possibly empty) $S$-stack
\begin{align}
\mcR_{n, \mcL}.
\end{align}
The structure morphism $\mcR_{n, \mcL} \rightarrow S$ of $\mcR_{n, \mcL}$ is quasi-finite, proper, and \'{e}tale (by the assumption $n < p$).
Also, it follows from \eqref{EQ456} that the assignment $(\msF^\heartsuit := (\mcF, \nabla, \{ \mcF^j \}_j), \eta) \mapsto \mcF^{n-1}$ gives an $S$-morphism
\begin{align} \label{EQ300}
\mcO p^{^\mr{Zzz...}}_{\msX, n, \vec{a}, \mcL_\flat} \rightarrow \mcR_{n, \mcL}.
\end{align}

%----------------------------------------------------------------
\bpr\label{Prop49}
(Recall that we have assumed the inequality $n < p$.)
The following assertions hold.
\begin{itemize}
\item[(i)]
The morphism
\begin{align} \label{EQ322}
\mcO p^{^\mr{Zzz...}}_{\msX, n, \vec{a}, \mcL_\flat} \xrightarrow{\eqref{EQ195} \& \eqref{EQ300}}
\overline{\mcO} p^{^\mr{Zzz...}}_{\msX, n, \rho (\vec{a})}  \times \mcR_{n, \mcL}
\end{align}
is an isomorphism of $S$-stacks.
\item[(ii)]
The structure morphism  $\mcO p^{^\mr{Zzz...}}_{\msX, n, \vec{a}, \mcL_\flat} \rightarrow S$ of $\mcO p^{^\mr{Zzz...}}_{\msX, n, \vec{a}, \mcL_\flat}$ is (not representable  but) quasi-finite and  proper.
Moreover, if $n= 2$, then this morphism is faithfully flat.
\end{itemize}
\epr
%----------------------------------------------------------------
\begin{proof}
To prove assertion (i), 
we  construct an inverse morphism of  \eqref{EQ322}.
The problem is  to construct a
functor
\begin{align} \label{EQ323}
\overline{\mcO} p^{^\mr{Zzz...}}_{\msX, n, \rho (\vec{a})} (S) \times \mcR_{n, \mcL} (S) \rightarrow \mcO p^{^\mr{Zzz...}}_{\msX, n, \vec{a}, \mcL_\flat} (S)
\end{align}
that is functorial  with respect to $S$ and defines an inverse to the fiber of  $\eqref{EQ322}$ over $S$.

Let us take a pair $(\msE^\spadesuit, \mcQ)$ consisting of a dormant $\mr{PGL}_n^{(\NN)}$-oper $\msE^\spadesuit$ on $\msX$ of radii $\rho (\vec{a})$ and a line bundle $\mcQ$ on $X$ classified by $\mcR_{n, \mcL}$.
By definition, $\mcQ$ admits an isomorphism  $\eta : \mcT^{\otimes \frac{n (n-1)}{2}} \otimes \mcQ^{\otimes n} \xrightarrow{\sim} \mcL$.
Then, $\nabla_\mcL$ corresponds, via this isomorphism, to a $\mcD^{(\NN -1)}$-module structure $\nabla_\vartheta$ on  $\mcT^{\otimes \frac{n (n-1)}{2}} \otimes \mcQ^{\otimes n}$.
The  pair $\vartheta := (\mcQ, \nabla_\vartheta)$ specifies a (dormant) $n^{(\NN)}$-theta characteristic of $\msX$, in the sense of ~\cite[\S\,5.4]{Wak4}.
By ~\cite[Theorem 5.5.1]{Wak4},
there exists a unique  dormant $(\mr{GL}_n^{(\NN)}, \vartheta)$-oper inducing $\msE^\spadesuit$ via projectivization; in particular, this defines a dormant $\mr{GL}^{(\NN)}_n$-oper $\msF^\heartsuit$, and $\eta$ forms an isomorphism between its determinant and $\mcL_\flat$.
Thus, the pair $(\msF^\heartsuit, \eta)$  specifies an object of   $\mcO p^{^\mr{Zzz...}}_{\msX, n, \vec{a}, \mcL_\flat} (S)$ (cf. the italicized comment in the second paragraph of this subsection).
The resulting assignment $(\msE^\spadesuit, \mcQ) \mapsto \msF^\heartsuit$  turns out to   define  the desired functor \eqref{EQ323}.
This completes the proof of assertion (i).

Moreover, assertion (ii) follows from ~\cite[Theorems B and C]{Wak4}.
\end{proof}
%----------------------------------------------------------------

%----------------------------------------------------------------
\bco \label{Co33}
(Recall that we have assumed the inequality $n < p$.)
Suppose 
the following two conditions:
\begin{itemize}
\item
The integer $\mr{deg}(\mcL) + \frac{n (n-1)(2g-2+r)}{2}$ is divided by $n$;
\item
$S = \mr{Spec}(k)$, i.e., $\msX$ is an $r$-pointed smooth proper curve over $k$ of genus $g$.
\end{itemize}
Then, the following equality holds:
\begin{align}
\sharp  (\mcO p^{^\mr{Zzz...}}_{\msX, n, \vec{a}, \mcL_\flat} (k)) 
= n^{2g} \cdot  \sharp (\overline{\mcO} p^{^\mr{Zzz...}}_{\msX, n, \rho (\vec{a})} (k) )< \infty,
\end{align}
where for a category $\mcC$  we denote by $\sharp (\mcC)$ the cardinality of the set of isomorphism classes of objects in $\mcC$.
\eco
%----------------------------------------------------------------
\begin{proof}
Since the first assumption implies  $\sharp (\mcR_{n, \mcL}(k)) = n^{2g}$,
the assertion follows from Proposition \ref{Prop49}, (i).
\end{proof}
%----------------------------------------------------------------

%----------------------------------------------------------------------------------
\begin{rem} \label{Rem78}
%Suppose that $n =2$.
According to  an argument similar to ~\cite[Remark 8.21]{Wak2} together with Proposition \ref{Prop49}, (i) and (ii),
there exists  an open substack
\begin{align} \label{Eq891}
{^\circledcirc}\mcM_{g, r}
\end{align}
of $\mcM_{g, r}$ classifying pointed curves $\msX$ such that  $\mcO p^{^\mr{Zzz...}}_{\msX, n,  \vec{a}, \mcL_\flat}$  is  \'{e}tale whenever $\mcL_\flat$ is  a $p^\NN$-flat line bundle  whose exponent at $\sigma_i$ coincides with $\sum\limits_{j=1}^n a_i^{[j]}$ modulo $p^\NN$ for every $i$.
% (cf. ~\cite[Theorem C]{Wak4} for the \'{e}taleness of $\mcO p^{^\mr{Zzz...}}_{\msX, n,  \vec{a}, \mcL_\flat}$).
Since  $\mcM_{g, r}$ is irreducible, 
${^\circledcirc}\mcM_{g, r}$ is either empty or dense in $\mcM_{g, r}$.
In the case of  $n=2$, 
we can find at least one choice of $\vec{a}$ such that ${^\circledcirc}\mcM_{g, r}$ is dense (cf. ~\cite[Theorems B and  C]{Wak4}). 
\end{rem}
%----------------------------------------------------------------------------------

%%%%%%%%%%%%%%%%%%%%%%%%%%%%%%%%%%%%%%%%%
\subsection{Harder-Narasimhan polygons} \label{SS53}

We shall consider a characterization of $\mr{GL}_n^{(\NN)}$-opers by using their Harder-Narasimhan polygons; our result is a higher-level and parabolic  generalization of  ~\cite[Theorem 1.1.2]{JoPa}. 
In this subsection, 
we suppose that $S = \mr{Spec}(k)$, i.e., $\msX$ is an $r$-pointed smooth proper  curve of genus $g$ over $k$.

Recall that 
each  vector bundle $\mcF$ on $X$ admits the Harder-Narasimhan filtration
\begin{align}
0 = \mcF^m \subsetneq \mcF^{m-1} \subsetneq \cdots \subsetneq \mcF^1 \subsetneq \mcF^0 = \mcF.
\end{align}
That is to say, $\mcF^j  /\mcF^{j+1}$ are all semistable, and the slopes (in the usual sense) of these vector bundles satisfy
\begin{align}
\mu_{\mr{max}} (\mcF) := \mu  (\mcF^{m-1}/\mcF^m) >  \mu (\mcF^{m-2}/\mcF^{m-1}) > \cdots > \mu (\mcF^0/\mcF^1) =:\mu_{\mr{min}}(\mcF).
\end{align}
This filtration associates a polygon in the plane $\mbR^2$ whose vertices 
are given by the points  $(\mr{rk}(\mcF^j), \mr{deg}(\mcF^j))$ ($0 \leq j \leq m$).
We denote this polygon by 
\begin{align} \label{EQ510}
P_\mcF.
\end{align}
The segment of  $P_{\mcF}$ joining $(\mr{rk}(\mcF^j), \mr{deg}(\mcF^j))$ and $(\mr{rk}(\mcF^{j-1}), \mr{deg}(\mcF^{j-1}))$ has slope $\mu (\mcF^{j-1}/\mcF^j)$.

Let us consider 
the set $B$ of convex polygons in  $\mbR^2$ starting at $(0, 0)$ and ending at 
$(n, n a -  \frac{n (n-1) (2g-2+r)}{2})$; this set
 is partially ordered:
if $P_1$, $P_2$ are two such polygons, we say that $P_1 \succcurlyeq P_2$
if $P_1$ lies on or above $P_2$.
Also, this set contains a specific polygon
 with 
vertices $q_j := \left(j, ja - \frac{j (j-1)(2g-2+r)}{2}\right)$ for $0 \leq j \leq n$; we denote it by 
\begin{align}
P_{\mr{oper}}^a.
\end{align}
As asserted in the proposition below, this convex polygon characterizes $\mcD^{(\NN -1)}$-modules admitting a structure of  $\mr{GL}_n^{(\NN)}$-oper.

%-----------------------------------------------------------------------------------
\bpr [cf. ~\cite{JoPa}, Theorem 1.1.2]\label{Eq332}
Let $(\N_1, \cdots, \N_r)$ be an $r$-tuple of positive integers and 
$\vec{a} := (a_1, \cdots, a_r)$  an element of $\prod_{i=1}^r \Xi_{\N_i, \NN}^{\leq}$ 
such that $s_1^{1} (\vec{a}) \in \prod_{i=1}^r \Xi_{\N_i, 1}^{\leq}$ and $s_2^{1} (\vec{a}) \in \prod_{i=1}^r \Xi_{\N_i, \NN -1}^{\leq}$ (cf. \eqref{EQ503}).
Also, let $n$ be a positive integer and $\msE := (\mcE, \vec{\mfe}, \vec{a}/p^\NN)$  a rank-$n$ parabolic bundle on $\msX^{(\NN)}$ of parabolic weights $\vec{a}/p^\NN$ whose parabolic Frobenius pull-back by $F^{(\NN -1)}_{X/S}$ is semistable.
Denote by $\msE^F := (\mcE^F, \nabla^F, \vec{\mfe}^F, \vec{a})$ the parabolic Frobenius pull-back of $\msE$ by $F^{(\NN)}_{X/S}$ and by 
\begin{align}
0 = \mcF^m \subsetneq \mcF^{m-1} \subsetneq \cdots \subsetneq \mcF^0 = \mcF
\end{align}
  the Harder-Narasimhan filtration of $\mcF := \mcE^F$.
Suppose that  $\mcF$ has  degree $n a - \frac{n (n-1)(2g-2+r)}{2}$ for  some  integer $a$.
Then, the following assertions hold.
\begin{itemize}
\item[(i)]
We have  $P_\mr{oper}^a \succcurlyeq P_{\mcF}$.
\item[(ii)]
The equality $P_\mr{oper}^a = P_{\mcF}$ holds if and only if the triple $(\mcF, \nabla^F,  \{ \mcF^j \}_j)$ forms  a (dormant) $\mr{GL}_n^{(\NN)}$-oper.
(If these conditions are  fulfilled, then $n = m$ and the equality $\mr{deg}(\mcF^{n-1}) = a$ holds.)
\end{itemize} 
\epr
%-----------------------------------------------------------------------------------
\begin{proof}
First, we shall prove assertion (i).
Suppose that there exists a vertex of $P_\mcF$ that lies above $P_\mr{oper}^a$.
The segment $S_j$ joining the vertices $q_{j-1}$ and $q_{j}$ has slope $s_j := a -(j-1) (2g-2+r)$.
Since $s_j - s_{j+1} = 2g-2+r$,
it follows from an elementary argument about slopes of convex polygons that
there exists $j \in \{1, \cdots, m-1 \}$ satisfying $\mu (\mcF^j/\mcF^{j+1}) - \mu  (\mcF^{j-1}/\mcF^j) > 2g-2+r$.
This contradicts the result of Lemma \ref{Lem42} proved below.
Thus, we have  $P_\mr{oper}^a \succcurlyeq P_{\mcF}$, as desired.

Next, we shall prove assertion (ii).
Since the ``if" part of the required equivalence is clear, we only consider
the ``only if" part.
Suppose that   $P_\mr{oper}^a = P_{\mcF}$ (hence $m= n$).
As an intermediate step, 
we shall prove, by descending  induction on $j$, the claim that $\nabla^F_0 (\mcF^{j}) \subseteq \mcF^{j-1}$ ($j=1, \cdots, n$), where $\nabla^F_0$ denotes the log connection on $\mcF$ induced from $\nabla^F$.

The base step, i.e., the case of $j=n$, is clear.
To consider the induction step, we suppose that the claim with $j$ replaced by  $j+1$ has been proved.
The composite 
\begin{align} \label{EQ520}
\alpha : \mcF^{j} \xrightarrow{\mr{inclusion}} \mcF \xrightarrow{\nabla^F_0} \Omega \otimes \mcF \xrightarrow{\mr{quotient}} \Omega \otimes (\mcF/\mcF^{j})
\end{align}
 is verified to be $\mcO_X$-linear.
By  the induction assumption, $\alpha$ induces, via the quotient $\mcF^j \twoheadrightarrow \mcF^j/\mcF^{j+1}$,  an $\mcO_X$-linear morphism
% \begin{align}
 $\overline{\alpha} : \mcF^j/\mcF^{j+1} \rightarrow \Omega \otimes (\mcF/\mcF^j)$.
% \end{align}
Since $\alpha$ (hence also $\overline{\alpha}$) is nonzero by Lemma \ref{Lem42}, 
$\mr{Im}(\overline{\alpha})$ turns out to be a rank one vector bundle.
Thus, 
there exists at least one integer $s < j$ such that the image of $\mr{Im}(\overline{\alpha})$ via  the natural quotient $\Omega \otimes (\mcF/\mcF^j) \twoheadrightarrow \Omega \otimes (\mcF/\mcF^{s+1})$ is nonzero;
let $s_\mr{min}$  be the minimum element among such  $s$'s. 
By definition,  $\overline{\alpha}$ induces an injection $\mcF^j/\mcF^{j+1} \hookrightarrow \Omega \otimes (\mcF^{s_\mr{min}}/\mcF^{s_\mr{min}+1})$.
But, 
the following equalities hold:
% (cf. \eqref{EQ491}):
\begin{align}
&  \ \ \ \  \mr{deg}(\Omega \otimes (\mcF^{s_\mr{min}}/\mcF^{s_\mr{min}+1})) -\mr{deg}(\mcF^j / \mcF^{j+1}) \\
& = \left((2g-2+r) +a  - (n -s_\mr{min} -1)(2g-2+r) \right)
  -\left(a -(n-j-1)(2g-2+r) \right) \notag \\
& = (2g-2+r) (s_\mr{min}-j +1).
\end{align}
It follows that the equality $s_\mr{min} = j-1$ must be fulfilled, which means 
$\nabla^F_0 (\mcF^j) \subseteq \mcF^{j-1}$.
 This completes the proof of the induction step, and hence completes the proof of the claim.

By comparing the respective degrees, we see that 
the injection $\mcF^j/\mcF^{j+1} \xrightarrow{} \Omega \otimes (\mcF^{j-1}/\mcF^j)$ induced by $\nabla^F_0$ is bijective.
Therefore, the morphism \eqref{Eq59} for $(\mcF, \nabla^F, \{ \mcF^j \}_j)$ turns out to be an isomorphism.
That is to say, it forms a $\mr{GL}_n^{(\NN)}$-oper.
This completes the proof of the ``only if" part.
\end{proof}
%-----------------------------------------------------------------------------------

The following lemma was used in the proof of the above proposition (cf. ~\cite[Lemma 5.1.1]{JoPa}, ~\cite[Lemma 4.2]{LasPa5}, ~\cite[Corollary 2]{She}, ~\cite[Theorem 3.1]{Sun5}).

%-----------------------------------------------------------------------------------
\ble \label{Lem42}
Let us keep the notation and assumption  in  the above proposition.
Then, for each $j= 1, \cdots, m-1$,
the subbundle $\mcF^{j}$ of $\mcF$ is not closed under $\nabla^F$ and satisfies 
 the inequality  
\begin{align} \label{EQ466}
\mu (\mcF^{j}/\mcF^{j+1}) - \mu (\mcF^{j-1} / \mcF^{j}) \leq 2g-2 +r.
\end{align}
In particular, (since $m < \mr{rank}(\mcE)$)  we have 
\begin{align}
\mu_{\mr{max}} (\mcF) - \mu_{\mr{min}} (\mcF) \leq (\mr{rank}(\mcE) -1) (2g-2+r).
\end{align}
\ele
%-----------------------------------------------------------------------------------
\begin{proof}
By the assumption on $\vec{a}$ together with Proposition \ref{Prop100},
we may assume that $\NN =1$.
Let us choose $j \in \{1, \cdots, m-1 \}$.
If  $\nabla_0^F$ is as in the proof of the above proposition,
then  the composite
\begin{align} \label{EQ501}
\mcF^{j} \xrightarrow{\mr{inclusion}}
\mcF \xrightarrow{\nabla^F_0} \Omega \otimes \mcF \xrightarrow{\mr{quotient}}
\Omega\otimes (\mcF/\mcF^{j})
\end{align} 
is verified to be $\mcO_X$-linear.

Suppose that $\mcF^j$ is closed under $\nabla^F_0$, i.e.,  \eqref{EQ501} becomes the zero map.
It follows from Proposition \ref{Prop26} that
the parabolic $p$-flat subbundle $\msE^{F, j}$ determined by $\mcF^j$ corresponds to a parabolic subbundle $\msG$ of $\msE$.
By \eqref{EQ500} and the definition of parabolic slope, we have 
\begin{align}
 \mr{par}\text{-}\mu (\msE)  &= \frac{1}{p} \cdot \left(
\mr{par}\text{-}\mu (\msE^F)\right)  \\
& = \frac{1}{p} \cdot \left( \mu (\mcF) - \frac{1}{n} \left(\sum_{i=1}^r \sum_{j'=1}^{\N_i} a_i^{[j']} \LL_i^{[j']}\right) \right) \notag \\
& <  \frac{1}{p} \cdot \left(\mu (\mcF^j) - \frac{1}{n} \cdot \left(\sum_{i=1}^r \sum_{j'=1}^{\M_i} a_i^{[j']} \LL_{i, \mcF^j}^{[j']} \right) \right)\notag \\
&= \frac{1}{p} \cdot \mr{par}\text{-}\mu (\msE^{F, j}) \notag \\
&=  \mr{par}\text{-}\mu (\msG)  \notag
\end{align}
where the ``$<$" follows from the fact that 
$\{ \mcF^j \}_j$ is the Harder-Narasimhan filtration (which  implies $\mu (\mcF^j) > \mu (\mcF)$).
This contradicts the semistability assumption on  $\msE$, and hence, proves the first assertion.

Moreover, this fact means that \eqref{EQ501} is nonzero, which 
implies
\begin{align}
\mu (\mcF^j/\mcF^{j+1}) = \mu_{\mr{min}} (\mcF^j) \leq \mu_{\mr{max}} (\Omega \otimes (\mcF/\mcF^j)) = \mu (\mcF^{j-1}/\mcF^j) + 2g-2+r.
\end{align}
This completes  the proof of the second assertion.
\end{proof}
%-----------------------------------------------------------------------------------

 %%%%%%%%%%%%%%%%%%%%%%%%%%%%%%%%%%%%%%%%%
%%%%%%%%%%%%%%%%%%%%%%%%%%%%%%%%%%%%%%%%%
\section{Maximally Frobenius-destabilized bundles} \label{S2}

Maximally Frobenius-destabilized bundles 
on unpointed curves
were  investigated in some references, e.g.,   ~\cite{JoPa}, ~\cite{JRXY}, ~\cite{JoXi},  ~\cite{LanPa},  and ~\cite{Zha}.
In particular, it has been shown that these bundles correspond bijectively to dormant opers via Cartier descent; this result enables us to resolve  related  counting problems  in the study of vector bundles in positive characteristic.
In this section, we describe the higher-level and parabolic generalization of this result  (cf. Theorem \ref{Prop3}).

We suppose that $S = \mr{Spec}(k)$, i.e., $\msX$ is an $r$-pointed smooth proper  curve of genus $g$ over $k$.

\LSP
%%%%%%%%%%%%%%%%%%%%%%%%%%%%%%%%%%%%%
\subsection{Maximally $F^{(\NN)}$-destabilized bundles} \label{SS28}

Let $n$ be a positive integer and 
  $\msE := (\mcE, \vec{\mfe}, \vec{a}/p^\NN)$ (where $\vec{\mfe} := (\mfe_1, \cdots \mfe_r)$)  
a rank-$n$ parabolic bundle on $\msX^{(\NN)}$ 
such that the type of the parabolic structure $\mfe_i$ at $\sigma_i $ coincides with   $1_{\times n} := (1, \cdots, 1) \in \mbZ_{>0}^{\times n}$ for every $i=1, \cdots, r$ and $\vec{a}$ belongs to $(\Xi_{n, \NN}^{\leq})^{\times r}$.
This bundle gives rise to  its parabolic Frobenius pull-back $\msE^F := (\mcE^F, \nabla^F, \vec{\mfe}^F, \vec{a})$ by $F^{(\NN)}_{X/S}$.
For convenience, we write $\mcF := \mcE^F$.

%-------------------------------------------------------------------------------------------------
\bde \label{Def}
We shall say that $\msE$ is  {\bf maximally $F^{(\NN)}$-destabilized} 
if
there exists an $n$-step decreasing filtration  
\begin{align}
0 = \mcF^n \subsetneq \mcF^{n-1} \subsetneq \cdots \subsetneq \mcF^1 \subsetneq \mcF^0 = \mcF
\end{align}
such that (the subquotients $\mcF^j/\mcF^{j+1}$ are line bundles and)  the equality
\begin{align}
\left(\mu (\mcF^j/\mcF^{j+1}) - \mu (\mcF^{j-1}/\mcF^j) = \right) \mr{deg} (\mcF^j/\mcF^{j+1}) - \mr{deg} (\mcF^{j-1}/\mcF^j) = 2g-2+r
\end{align} 
holds  for every $j=1, \cdots, m-1$.
We shall refer to such a filtration as a {\bf destabilizing filtration} of $\msE$.
\ede
%-------------------------------------------------------------------------------------------------

%-----------------------------------------------------------------------------------
\bpr \label{Prop82}
Let $\msE$  be as above,
and let us consider 
the following three conditions:
\begin{itemize}
\item[(a)]
$\msE$ is maximally $F^{(\NN)}$-destabilized;
\item[(b)]
$s_1^1 (\vec{a}) \in (\Xi_{n, 1}^{\leq})^{\times r}$ and $s_2^1 (\vec{a}) \in (\Xi_{n, \NN -1}^{\leq})^{\times r}$ (cf. \eqref{EQ503});
\item[(c)]
The inequalities 
$\sum\limits_{i=1}^r (s_1^1 (a_i^{[n]})-s_1^1 (a_i^{[1]})) < \frac{n (2g-2+r)}{2} <  \frac{p}{n}$
 hold.
\end{itemize}
Then, the following assertions hold.
\begin{itemize}
\item[(i)]
If (a) is satisfied, then
a destabilizing filtration of $\msE$ is uniquely determined.
(Thus, it makes sense to speak of ``the" destabilizing filtration of $\msE$.)
\item[(ii)]
If $n \mid \mr{deg}(\mcE)$ and  all the conditions (a)-(c) are satisfied, then 
$\msE$ is stable.
\end{itemize}
 \epr
 %-----------------------------------------------------------------------------------
 \begin{proof}
  Assertion (i) can be proved by a routine argument using $2g-2+r > 0$ (so we omit the detail of the proof).

In the following,  we prove assertion (ii). 
  Suppose that there exists a proper parabolic subbundle $\msG := (\mcG, \vec{\mfg}, \vec{a}/p^\NN)$ of $\msE$ satisfying  $\mr{par}\text{-}\mu (\msG) \geq \mr{par}\text{-}\mu (\msE)$.
  By  Proposition \ref{Prop26},
  the parabolic Frobenius pull-back $\msG^F := (\mcG^F, \nabla^F_\mcG, \vec{\mfg}^F, \vec{a})$ specifies a parabolic $p^\NN$-flat subbundle of $\msE^F$.
  Denote by $m \left( < n \right)$ the rank of $\mcG^F$, and 
  define a decreasing filtration $\{ \mcG^{F, j} \}_{j=0}^n$ on $\mcG^F$ by putting $\mcG^{F, j} := \iota^{-1}(\mcF^j)$, where 
  $\{ \mcF^j \}_{j=0}^n$ denotes the destabilized filtration of $\msE$ and 
  $\iota$ denotes the natural inclusion $\mcG^F \hookrightarrow \mcF \left( =\mcF^0 =  \mcE^F\right)$.
  Then, one obtains a subset $B := \left\{ j \, | \, \mcG^{F, j} \neq \mcG^{F, j+1} \right\}$ of $\{0, \cdots, n-1 \}$.
 By Lemma \ref{Lem442} below together with 
  an argument similar to the proof of Proposition \ref{Eq332} (by using the assumption (b)),
  we see 
   that the log connection on $\mcF$ induced from $\nabla^F$ gives, in a usual manner,  an injection  $\mcF^j /\mcF^{j+1} \hookrightarrow \Omega \otimes (\mcF^{j-1}/\mcF^j)$ (i.e., the $j$-th Kodaira-Spencer map).
  By the condition (a), this morphism is verified to be an isomorphism; 
   it moreover restricts to an injection $\mcG^{F, j}/\mcG^{F, j+1} \hookrightarrow \Omega \otimes (\mcG^{F, j-1}/\mcG^{F, j})$.
It follows that $j \in B \cap \{ 1, \cdots, n-1\}$ implies $j-1  \in B$, and hence that $B = \{ 0, \cdots, m -1\}$.
Hence, the composite
$\mcG^F \xrightarrow{\mr{inclusion}} \mcF \twoheadrightarrow \mcF/\mcF^m$
  turns out to be injective.
  This implies
  \begin{align} \label{EQ522}
  \mu (\mcG^F) \leq \mu (\mcF/\mcF^m).
  \end{align}

On the other hand, if we write $c:= \mr{deg}(\mcF^{n-1})$, then we have
  \begin{align} \label{EQ524}
  \mr{par}\text{-}\mu (\msE) 
  &= \frac{1}{p^\NN \cdot n} \cdot \mr{deg}(\mcF) \\
  & = 
  \frac{1}{p^\NN \cdot n} \cdot \sum_{j=0}^{n-1}\mr{deg}(\mcF^j/\mcF^{j+1})  \notag \\
 & = \frac{1}{p^\NN\cdot n} \cdot  \sum_{j=0}^{n-1}\left(c-\frac{(n-j-1)(2g-2+r)}{2} \right) \notag \\
  & = 
   \frac{1}{p^\NN} \cdot \left( 
  c- \frac{(n-1)(2g-2+r)}{2}  \right), \notag
  \end{align}
  where  the first equality follows from \eqref{EQ500}
  and  the third equality follows from
  the assumption that $\msE$ is maximally $F^{(\NN)}$-destabilized.
  Thus,  by combining it with \eqref{EQ522}, we have
  \begin{align}
  \mr{par}\text{-}\mu (\msG) &=
  \frac{1}{p^\NN} \cdot \mu (\mcG^F) \\
&   \leq \frac{1}{p^\NN} \cdot \mu (\mcF/\mcF^m) \notag \\
&= \frac{1}{p^\NN \cdot m} \cdot \sum_{j=0}^{m-1} \mr{deg}(\mcF^j/\mcF^{j+1}) \\
& =  \frac{1}{p^\NN \cdot m} \cdot\sum_{j=0}^{m-1}  \left(c-\frac{(n-j-1)(2g-2+r)}{2} \right) \notag \\
& =  \frac{1}{p^\NN} \cdot \left( c-\frac{(2n - m-1)(2g-2+r)}{2} \right) \notag \\
& \stackrel{\eqref{EQ524}}{=} \mr{par}\text{-}\mu (\msE) + \frac{(m-n)(2g-2+r)}{p^\NN \cdot 2} \notag \\
&  < \mr{par}\text{-}\mu (\msE).
  \end{align}
  This is a contradiction, and we finish the proof of assertion (ii).
   \end{proof}
  %-----------------------------------------------------------------------------------

The following lemma was used in the proof of the above proposition.

%-----------------------------------------------------------------------------------
\ble \label{Lem442}
Suppose that $n \mid \mr{deg} (\mcE)$ and  the conditions (a)-(c) in the above proposition are satisfied.
Then, the proper nontrivial subbundles $\mcF^j$ ($j=1, \cdots, n-1$) of $\mcF$ are  not closed under $\nabla^F$.
\ele
%-----------------------------------------------------------------------------------
\begin{proof}
To prove the assertion, we are always free to replace $S$ with its \'{e}tale covering.
Hence, since $n \mid \mr{deg} (\mcE)$, we may assume, after possibly tensoring  $\mcE$ with a line bundle, that $\mr{deg}(\mcE) = 0$.
By the condition (b) together with Proposition \ref{Prop100},
we may assume that $\NN =1$. 

Now, suppose that $\mcF^j$ (for some $j=1, \cdots, n-1$) of $\mcF$ is  closed under $\nabla^F$.
If  $\nabla^j$ denotes the log connection on $\mcF^j$ obtained by restricting $\nabla^F$, then we have $F^*_{X/S}\mcS ol (\nabla^j) = F^*_{X/S} \mcS ol (\nabla) \cap \mcF^j$ (where $F_{X/S} := F^{(1)}_{X/S}$), and  both sides of this equality have degree divided by $p$.
Let us observe the following equalities:
\begin{align}
\mr{deg}(\mcF) &= \sum_{j'=0}^{n-1} \mr{deg}(\mcF^{j'}/\mcF^{j'+1}) \\
& = \sum_{j' = 0}^{n-1} \left(\mr{deg}(\mcF^{n-1}) -(n- j' -1)(2g-2+r) \right)\\
& =  n \cdot \mr{deg}(\mcF^{n-1}) -\frac{n (n-1)(2g-2+r)}{2}. \notag
\end{align}
Hence,  the equality $\mr{deg}(\mcE) = 0$ and 
\eqref{EQ21} together imply 
\begin{align}
\sum_{i=1}^r \sum_{j=1}^n a_{i}^{[j]} \left(= p \cdot \mr{deg}(\mcE) + 
\sum_{i=1}^r \sum_{j=1}^n a_{i}^{[j]} \right)=n \cdot \mr{deg}(\mcF^{n-1})  -\frac{n (n-1)(2g-2+r)}{2}.
\end{align}
It follows that
\begin{align}
\mr{deg}(\mcF^j) &= \sum_{j' = j}^{n-1} \mr{deg}(\mcF^{j'}/\mcF^{j' +1}) \\
& = \sum_{j' = j}^{n-1} \left(\mr{deg}(\mcF^{n-1})  - (n-j' -1)(2g-2+r) \right) \notag \\
& = (n-j)\cdot \mr{deg}(\mcF^{n-1})  -\frac{(n-j)(n-j-1) (2g-2+r)}{2} \notag \\
& =  \frac{j (n-j)(2g-2+r)}{2} + \frac{n-j}{n} \cdot   \sum_{i=1}^r \sum_{j=1}^{n} a_i^{[j]}.
\end{align}

Next, observe that 
\begin{align}
& \ \ \ \ \mr{deg}(F^*_{X/S}\mcS ol (\nabla)\cap \mcF^j)  \\
& \leq \mr{deg}(\mcF^j)  - \sum_{i=1}^r \sum_{j' =1}^{n-j}a_{i}^{[j']}
 \notag  \\
& =  \frac{j (n-j)(2g-2+r)}{2} + \frac{1}{n} \cdot \sum_{i=1}^r  \left((n-j) \cdot \sum_{j'=n-j+1}^{n} a_i^{[j]} - j \cdot  \sum_{j' =1}^{n-j} a_i^{[j']}  \right)
  \notag \\
& = \frac{1}{n} \cdot \sum_{j'= n-j+1}^n \sum_{j''=1}^{n-j} \left(\frac{n (2g-2+r)}{2} + \sum_{i=1}^r (a_i^{[j']} - a_i^{[j'']})\right) \notag \\
& < \frac{1}{n} \cdot \frac{n^2}{2} \cdot \left(\frac{n (2g-2+r)}{2} + \frac{n (2g-2+r)}{2} \right) \notag \\
& \leq p,
\end{align}
where the inequality ``$<$" follows from the first inequality in the condition (c) and the last inequality ``$\leq$" follows from the second one in (c).
On the other hand, we have
\begin{align}
& \ \ \ \ \mr{deg}(F^*_{X/S}\mcS ol (\nabla)\cap \mcF^j)  \\
& \geq \mr{deg}(\mcF^j) - \sum_{i=1}^r \sum_{j'=j+1}^n a_i^{[j']} \notag  \\
& =   \frac{j (n-j)(2g-2+r)}{2} +
\frac{1}{n} \cdot \sum_{i=1}^r \left( (n-j) \cdot \sum_{j' = 1}^j a_i^{[j']} -j \cdot \sum_{j' = j+1}^n a_i^{[j']}\right)
  \notag\\
& =   \frac{1}{n} \cdot \sum_{j'= 1}^j \sum_{j''=j+1}^{n} \left(\frac{n (2g-2+r)}{2} + \sum_{i=1}^r (a_i^{[j']} - a_i^{[j'']})\right)
 \notag \\
& > 0.
\end{align}
These computations contradict the fact that  $\mr{deg}(F^*_{X/S}\mcS ol (\nabla)\cap \mcF^j)$ is divided by $p$.
This completes the proof of this lemma.
\end{proof}
%-----------------------------------------------------------------------------------

Let $\mcL$ be a line bundle on $X^{(\NN)}$ with $n \mid \mr{deg}(\mcL)$ and $\vec{a}$ an element  of $(\Xi_{n, \NN}^{<})^{\times r} \left(\subseteq (\Xi_{n, \NN}^{\leq})^{\times r}\right)$ as before such that 
the conditions (b) and (c) described in Proposition \ref{Prop82} are fulfilled.
 Also, we set $\vec{1}_{\times n} := (1_{\times n}, \cdots, 1_{\times n})$ (= the $r$-tuple of copies of $1_{\times n}$) and $\omega := (n, \vec{1}_{\times n}, \vec{a}/p^\NN)$.
 Then, it follows from Proposition \ref{Prop82}, (ii), that
 there exists a fibered (full)  subcategory
\begin{align} \label{EQ178}
\mcU^F_{\msX^{(\NN)},  \omega, \mcL}
\end{align}
 of  $\mcU_{\msX^{(\NN)}, \omega,  \mcL}$
 classifying
maximally $F^{(\NN)}$-destablized stable parabolic bundles.

%%%%%%%%%%%%%%%%%%%%%%%%%%%%%%%%%%%%%
\subsection{Correspondence with dormant $\mr{GL}_n^{(\NN)}$-opers} \label{SS28}

 We keep the notation and assumptions introduced at the end of  the previous subsection, and 
  write
$\mcL^F_\flat := ((F^{(\NN)*}_{X/S}\mcL)(D_\star), \nabla^\mr{can}_{\mcL, D_\star})$ (cf. \eqref{Eq214}),
where $D_\star := \sum\limits_{i=1}^r ( \sum\limits_{j=1}^n a_i^{[j]}) \sigma_i$.

%-------------------------------------------------------------------------------------------------
\bt [cf. Theorem \ref{ThA}] \label{Prop3}
\begin{itemize}
\item[(i)]
Let $\msE := (\mcE, \vec{\mfe}, \vec{a}/p^\NN)$ be the parabolic 
bundle classified by an $S$-rational point of $\mcU^F_{\msX^{(\NN)}, \omega, \mcL}$.
 Denote by $\{ \msE^{F, j} \}_{j=0}^n$ be the destabilizing filtration of $\msE^F$.
Then, the collection of data 
\begin{align} \label{Eq250}
\msE^{F, \heartsuit} := (\mcE^F, \nabla^F, \{ \mcE^{F, j}\}_j)
\end{align}
 forms a dormant $\mr{GL}_n^{(\NN)}$-oper 
 classified by $\mcO p^{^\mr{Zzz...}}_{\msX, n, \vec{\alpha}, \mcL^F_\flat}$ (cf. \eqref{Eq405}).
\item[(ii)]
Suppose that $\sum\limits_{j=1}^{n} a_i^{[j]}<p^\NN$ for every $i=1, \cdots, r$.
Then, the assignment $\msE \mapsto \msE^{F, \heartsuit}$ defines an isomorphism of fibered categories 
\begin{align}
\mcU^F_{\msX^{(\NN)}, \omega, \mcL} \xrightarrow{\sim} \mcO p_{\msX, n, \vec{a}, \mcL^F_\flat}^{^\mr{Zzz...}}.
\end{align}
\item[(iii)]
Let us keep the assumption in (ii).
Suppose further  that $n$ does not divide the integer  $\frac{n (n-1)(2g-2+r)}{2} + \sum\limits_{i=1}^r \sum\limits_{j=1}^n a_i^{[j]}$.
Then, 
$\mcU^F_{\msX^{(\NN)}, \omega, \mcL}$ is empty.
\end{itemize}
\et
%-------------------------------------------------------------------------------------------------
\begin{proof}
First, we shall consider assertion (i).
By Proposition  \ref{Prop82}, (ii), the parabolic Frobenius pull-back of $\msE$ by  $F_{X/S}^{(\NN -1)}$ is stable.
Hence, the assertion  follows from Proposition \ref{Eq332}, (ii).

Next, we shall consider assertion (ii).
The assignment $\msE \mapsto \msE^{F, \heartsuit}$ is functorial because of its construction, so it defines a functor
\begin{align} \label{EQ544}
\mcU^F_{\msX^{(\NN)}, \omega, \mcL} \rightarrow  \mcO p_{\msX, n, \vec{a}, \mcL^F_\flat}^{^\mr{Zzz...}}.
\end{align}
We shall construct its inverse.
Let $(\msF^\heartsuit, \eta)$ (where $\msF^\heartsuit := (\mcF, \nabla, \{ \mcF^j \}_{j=0}^n)$) be an object classified by $\mcO p^{^\mr{Zzz...}}_{\msX, n, \vec{a}, \mcL_\flat^F}$.
Apply Lemma \ref{EQ244} described below to the restriction of $(\mcF, \nabla)$ to the formal neighborhood of each $\sigma_i$.
Then,  there exists a quasi-parabolic structure $\vec{\mff}$ on $\mcF$, for which $(\mcF, \nabla, \vec{\mff}, \vec{a})$ specifies a parabolic $p^\NN$-flat bundle.
It follows from Theorem \ref{Prop11}  that the horizontal  parabolic bundle
 $\msF^\nabla := (\mcF^\nabla, \vec{\mff}^\nabla, \vec{a}/p^\NN)$ 
 associated to this collection 
  is maximally $F^{(\NN)}$-destabilized.
  Moreover, by the assumptions  on $a_i^{[j]}$'s imposed in (ii), 
   we see that $\msF^\nabla$ belongs to
  $\mcU^F_{\msX^{(\NN)}, \omega, \mcL}(S)$ (cf. Proposition \ref{Prop82}, (ii)).
The formation of the assignment $(\msF^\heartsuit, \eta) \mapsto \msF^\nabla$ is functorial with respect to $S$, and hence, determines a functor 
\begin{align}
\mcO p_{\msX, n, \vec{a}, \mcL^F_\flat}^{^\mr{Zzz...}} \rightarrow \mcU^F_{\msX^{(\NN)}, \omega, \mcL}.
\end{align}
By  Theorem \ref{Prop11}, this functor defines
an inverse to \eqref{EQ544}.
So we finish the proof of assertion (ii).

Finally,
since $\frac{n (n-1)(2g-2+r)}{2} + \sum\limits_{i=1}^r \sum\limits_{j=1}^n a_i^{[j]}$  equals the degree of $(F_{X/S}^{(\NN)*}\mcL)(D_\star)$ modulo $n$, 
 assertion (iii) follows from assertion (ii) and the fact mentioned at the end of Remark \ref{EQ201}  
\end{proof}
%-------------------------------------------------------------------------------------------------

The following lemma was used in the proof of the above theorem.

%-------------------------------------------------------------------------------------------------
\ble \label{EQ244}
Let $(\mcF, \nabla)$ be a
rank-$n$ $p^\NN$-flat bundle on $\msU$ whose exponent is given by
 $[a_\circ^{[1]}, \cdots,  a_\circ^{[n]}]$ for some  element
$a_\circ := (a_\circ^{[1]}, \cdots, a_\circ^{[n]})$ of $\Xi_{n, \NN}^{<}$.
Then, there exists a unique quasi-parabolic structure $\mff_\circ$ on $\mcF$ for which $(\mcF, \nabla, \mff_\circ)$ forms a parabolic $p^\NN$-flat bundles.
\ele
%-------------------------------------------------------------------------------------------------
\begin{proof}
Denote by $d^{[j]}$ ($j=1, \cdots, n$) the image of $a_{\circ}^{[j]}$ via the quotient $\mbZ \twoheadrightarrow \mbZ/p^\NN \mbZ$.
To prove this assertion, we may assume, without loss of generality, that
there exists an isomorphism of $\mcD_\circ^{(\NN -1)}$-modules  $\xi :  (\mcF, \nabla) \xrightarrow{\sim} \bigoplus_{j=1}^n \msO_{d^{[j]}, \flat}$. 
By considering the eigenspaces of the monodromy operator of $\bigoplus_{j=1}^n \nabla_{d^{[j]}}$ at $\sigma_\circ$,
it is verified that there exists a unique quasi-parabolic structure $\mff'_\circ$ on $\mcO_U^{\oplus n}$ for which the collection  $(\mcO_U^{\oplus n}, \bigoplus_{j=1}^n \nabla_{d^{[j]}}, \mff'_\circ)$ forms a parabolic $p^\NN$-flat bundle.
The quasi-parabolic structure $\mff_\circ$ on $\mcF$ corresponding to $\mff'_\circ$  via $\xi$ provides  a required one.

Moreover, since $a_\circ^{[1]} < \cdots < a_\circ^{[n]}$,  it follows from the definition of $\mff'_\circ$ and  ~\cite[Proposition 4.3.4, (ii)]{Wak4} that $\mff'_\circ$ is invariant under transposing by  any automorphism of $\bigoplus_{i=1}^n \msO_{d_\circ^{[j]}, \flat}$.
Hence, $\mff_\circ$ does not depend on the choice of $\xi$.
This proves the uniqueness assertion, as desired.
\end{proof}
%-------------------------------------------------------------------------------------------------

By combining Proposition \ref{Prop49} and  Theorem \ref{Prop3}, (ii) and (iii),
 we obtain  the following assertion.

%-------------------------------------------------------------------------------------------------
\bco \label{Cor1}
Let us keep the above notation and assumptions, and suppose that  $\sum\limits_{j=1}^n a_i^{[j]} < p^\NN$ for every $i=1, \cdots, r$.
Then, the following assertions hold.
\begin{itemize}
\item[(i)]
 $\mcU^F_{\msX^{(\NN)}, \omega, \mcL}$ may be represented by a (possibly empty) stack over $S$ whose structure morphism 
  $\mcU^F_{\msX^{(\NN)}, \omega, \mcL} \rightarrow S$ is (not representable but) quasi-finite and proper.
Moreover, 
if $n=2$, then this morphism is faithfully flat.
\item[(ii)]
$\mcU^F_{\msX^{(\NN)}, \omega, \mcL}$ is nonempty if and only if 
$\overline{\mcO} p^{^\mr{Zzz...}}_{\msX, n, \rho(\vec{a})}$ is nonempty and  $n$ divides the integer $\frac{n(n-1)(2g-2+r)}{2} +\sum\limits_{i=1}^r \sum\limits_{j=1}^n a_i^{[j]}$. 
\end{itemize}
\eco
%-------------------------------------------------------------------------------------------------

Next, we perform an explicit computation of the number of maximally $F^{(1)}$-destabilized parabolic bundles of rank $2$.
In the non-parabolic case, this computation  was carried out  in ~\cite{Wak1} and ~\cite{Wak2} by applying  a correspondence with dormant $\mr{PGL}_2$-opers.

%-----------------------------------------------------------------------------------
\bt [cf. Theorem \ref{ThB}] \label{Th22}
Let us take a line bundle $\mcL$ on $X^{(1)}$ of even degree  and  an element  $\vec{a} := (a_1, \cdots, a_r) \in (\Xi_{2, 1}^{<})^{\times r}$, where $a_i := (a_i^{[1]}, a_i^{[2]})$ ($i=1, \cdots, r$).
We write $\omega := (2, \vec{1}_{\times 2}, \vec{a}/p)$, and 
suppose that 
$r + \sum\limits_{i=1}^r (a_i^{[1]} + a_i^{[2]})$ is even and $\sum\limits_{i=1}^r (a_i^{[2]}-a_i^{[1]}) <2g-2+r <  \frac{p}{2}$.
Then, the  cardinality $\sharp (\mcU^F_{\msX^{(1)}, \omega, \mcL} (k))$ of the set of isomorphism classes of objects in $\mcU^F_{\msX^{(1)}, \omega, \mcL} (k)$ satisfies the following inequality:
 \begin{align}
  \sharp (\mcU^F_{\msX^{(1)}, \omega, \mcL} (k)) 
  \leq   2 \cdot p^{g-1} \cdot  \sum_{j =1}^{p-1} \frac{\prod\limits_{i=1}^r (-1)^{(j+1)(a_i^{[2]}-a_i^{[1]}+1)} \sin \left( \frac{(a_i^{[2]}-a_i^{[1]}) j \pi}{p}\right)}{\sin^{2g-2+r} \left(\frac{j \pi}{p} \right)}.
  \end{align}
 Moreover, this inequality becomes an equality when $\msX$ is sufficiently general,  or more precisely, classified by a point of ${^\circledcirc}\mcM_{g, r}$ (cf. \eqref{Eq891}) for $(n, \NN)=(2, 1)$.
\et
%-----------------------------------------------------------------------------------
\begin{proof}
By  Corollaries \ref{Co33},  \ref{Cor1}, and ~\cite[Theorem 7.41]{Wak2},
we have 
 \begin{align} \label{EQ570}
  \sharp (\mcU^F_{\msX^{(1)}, \omega, \mcL} (k)) 
  \leq   2 \cdot p^{g-1} \cdot  \sum_{j =1}^{p-1} \frac{\prod_{i=1}^r \sin \left( \frac{(2 \cdot \tau (a_i^{[2]}-a_i^{[1]}) +1) j \pi}{p}\right)}{\sin^{2g-2+r} \left(\frac{j \pi}{p} \right)},
 \end{align}
where, for an integer $b$ with $0 \leq b \leq p-1$,
we set  $\tau (b) := \frac{b-1}{2}$ (resp., $\tau (b):= \frac{p-1-b}{2}$)
 if $b$ is odd (resp., even).
 On the other hand,  the following equality holds: 
 \begin{align} \label{EQ571}
 \sin \left( \frac{(2 \cdot \tau (a_i^{[2]}-a_i^{[1]}) +1) j \pi}{p}\right) = (-1)^{(j+1)(a_i^{[2]}-a_i^{[1]}+1)}\cdot \sin \left(\frac{(a_i^{[2]}-a_i^{[1]} ) j\pi}{p} \right). 
 \end{align}
Thus,  the first assertion  follows from \eqref{EQ570} and \eqref{EQ571}.

The second assertion follows from the generic \'{e}taleness of $\overline{\mcO}p^{^\mr{Zzz...}}_{\msX, 2, \rho (\vec{a})}/S$ proved in ~\cite[Chap.\,II, Theorem 2.8]{Mzk2}.
\end{proof}
%-----------------------------------------------------------------------------------

%%%%%%%%%%%%%%%%%%%%%%%%%%%%%%%%%%%%%
\subsection*{Acknowledgements} 
We are grateful for the many constructive conversations we had with {\it algebraic curves in positive characteristic}, who live in the world of mathematics!
Our work was partially supported by Grant-in-Aid for Scientific Research (KAKENHI No. 21K13770).

%%References section

\end{document}